\newtheorem{definition}{Definition}[section]
\newtheorem{lemma}[definition]{Lemma}
\newtheorem{proposition}[definition]{Proposition}
\newtheorem{theorem}[definition]{Theorem}
\newtheorem{conjecture}[definition]{Conjecture}
\newtheorem*{notation}{Notation}
\theoremstyle{remark}
\declaretheorem[name=Remark,sibling=theorem,qed={\lower-0.3ex\hbox{$\diamond$}}]{remark}
\declaretheorem[name=Note,sibling=theorem,qed={\lower-0.3ex\hbox{$\diamond$}}]{note}
\title{On $p$-adic $L$-functions for $\GSp_4 \times \GL_2$}
\newcommand{\FL}{FL}
\DeclareMathOperator{\Iw}{Iw}
\newcommand{\ZZ}{\mathbf{Z}}
\newcommand{\QQ}{\mathbf{Q}}
\newcommand{\CC}{\mathbf{C}}
\newcommand{\FF}{\mathbf{F}}
\newcommand{\GG}{\mathbf{G}}
\newcommand{\DD}{\mathbf{D}}
\renewcommand{\AA}{\mathbf{A}}
\newcommand{\Qp}{\QQ_p}
\newcommand{\Zp}{\ZZ_p}
\newcommand{\cO}{\mathcal{O}}
\newcommand{\cW}{\mathcal{W}}
\newcommand{\hl}{\pmb{\ell}}
\newcommand{\hr}{{\mathbf r}}
\newcommand{\htt}{{\mathbf t}}
\newcommand{\hz}{\mathbf{z}}
\newcommand{\fp}{\mathfrak{p}}
\newcommand{\cX}{\mathcal{X}}
\newcommand{\cY}{\mathcal{Y}}
\newcommand{\cB}{\mathcal{B}}
\renewcommand{\ge}{\geqslant}
\renewcommand{\le}{\leqslant}
\DeclareMathOperator{\ad}{ad}
\DeclareMathOperator{\an}{an}
\DeclareMathOperator{\anInd}{anInd}
\DeclareMathOperator{\cris}{cris}
\DeclareMathOperator{\cusp}{cusp}
\DeclareMathOperator{\diag}{diag}
\DeclareMathOperator{\ES}{ES}
\DeclareMathOperator{\fin}{f}
\DeclareMathOperator{\Frob}{Frob}
\DeclareMathOperator{\fs}{fs}
\DeclareMathOperator{\Gal}{Gal}
\DeclareMathOperator{\GL}{GL}
\DeclareMathOperator{\Gr}{Gr}
\DeclareMathOperator{\GSp}{GSp}
\DeclareMathOperator{\HT}{HT}
\DeclareMathOperator{\id}{id}
\DeclareMathOperator{\imp}{imp}
\DeclareMathOperator{\Kl}{Kl}
\DeclareMathOperator{\nc}{nc}
\DeclareMathOperator{\PR}{PR}
\DeclareMathOperator{\pr}{pr}
\DeclareMathOperator{\Si}{Si}
\DeclareMathOperator{\Spa}{Spa}
\DeclareMathOperator{\Spec}{Spec}
\DeclareMathOperator{\ssa}{ss}
\DeclareMathOperator{\sss}{sss}
\DeclareMathOperator{\tor}{tor}
\newcommand{\tU}{\mathtt{U}}
\newcommand{\tZ}{\mathtt{Z}}
\newenvironment{smatrix}{\left(\begin{smallmatrix}}{\end{smallmatrix}\right)}
\newcommand{\tbt}[4]{
	\ensuremath{
		\begin{pmatrix}
			#1 & #2 \\
			#3 & #4
		\end{pmatrix}
	}
}
\newcommand{\stbt}[4]{
	\ensuremath{
		\begin{smatrix}
			#1 & #2 \\
			#3 & #4
		\end{smatrix}
	}
}
\begin{document}

\begin{abstract}
We use higher Coleman theory to construct a new $p$-adic $L$-function for $\GSp_4 \times \GL_2$. While previous works by the first author, Pilloni, Skinner and Zerbes had considered the $p$-adic variation of classes in the $H^2$ of Shimura varieties for $\GSp_4$, in this note we explore the interpolation of classes in the $H^1$, which detect critical values for a different range of weights, disjoint from the range covered by this earlier construction. Further, we show an interpolation property in terms of complex $L$-values using the algebraicity results established in previous work by the authors.
\end{abstract}

\renewcommand{\urladdrname}{\emph{ORCID}}

\author{David Loeffler}
\address[D.L.]{Mathematics Institute, University of Warwick, Coventry, UK}
\curraddr{UniDistance Suisse, Schinerstrasse 18, 3900 Brig, Switzerland}
\email{david.loeffler@unidistance.ch}
\urladdr{\href{https://orcid.org/0000-0001-9069-1877}{\upshape\path{0000-0001-9069-1877}}}

\author{\'Oscar Rivero}
\address{O.R.: Mathematical Sciences Research Institute / Simons Laufer Mathematical Sciences Institute, 17 Gauss Way, Berkeley, CA 94720, USA}
\curraddr{Department of Mathematics, Universidade de Santiago de Compostela, Spain}
\email{oscar.rivero@usc.es}

\thanks{Supported by ERC Consolidator grant ``Shimura varieties and the BSD conjecture'' (D.L.) and Royal Society Newton International Fellowship NIF\textbackslash R1\textbackslash 202208 (O.R.). This material is based upon work supported by the National Science Foundation under Grant No. DMS-1928930 while the authors were in residence at the Mathematical Sciences Research Institute in Berkeley, California, during the Spring 2023 semester.}

\subjclass[2010]{11F46; 11F67, 11R23.}

\keywords{$p$-adic $L$-functions, higher Coleman theory, Siegel Shimura varieties}
 \maketitle
 \setcounter{tocdepth}{1}
 \tableofcontents

\section{Introduction}

Let $\pi$ and $\sigma$ be cuspidal automorphic representations of $\GSp_4 / \QQ$ and $\GL_2 / \QQ$ respectively. Then we have a degree 8 $L$-function $L(\pi \times \sigma, s)$, associated to the tensor product of the natural degree 4 (spin) and degree 2 (standard) representations of the $L$-groups of $\GSp_4$ and $\GL_2$. If $\pi$ and $\sigma$ are algebraic, then this $L$-function is expected to correspond to a motive, and in particular we can ask whether it has critical values.

We suppose that $\pi$ (or, more precisely, its $L$-packet) corresponds to a holomorphic Siegel modular eigenform of weight $(k_1, k_2)$, for $k_1 \ge k_2 \ge 2$ integers, and that $\sigma$ corresponds to a holomorphic elliptic modular form of weight $\ell \ge 1$. For $L(\pi \times \sigma, s)$ to be a critical value, we must have $s = \tfrac{-(k_1 + k_2 + \ell- 4)}{2} + j$ for $j \in \ZZ$, so that $L(\pi \times \sigma, s) = L(V_p(\pi) \otimes V_p(\sigma), j)$ where $V_p(-)$ are the Galois representations corresponding to $\pi$ and $\sigma$; and the tuple $(k_1, k_2, \ell, j)$ has to satisfy one of 3 different sets of (mutually exclusive) inequalities, which we have outlined in more detail in the companion paper \cite{LR2}, corresponding to the cases $(A)$, $(D)$, $(F)$ in Table 1 of \emph{op.cit.}.
  In this paper, we focus on region $(D)$, which is given by the inequalities
\begin{equation}
 \label{eq:ineqs_j}
 \begin{aligned}
 k_1 - k_2 + 3 &\le \ell \le k_1 + k_2 - 3, \\
 \max(k_1, \ell) &\le j \le \min(k_2 + \ell + 3, k_1 + k_2 - 3).
 \end{aligned}
\end{equation}
The corresponding values of $s$ and $\ell$ are illustrated in the diagram below. (The ``off-centre'' regions $(B), (E)$, and the two grey diagonal lines, will be explained shortly.)

\tikzset{every picture/.style={line width=0.75pt}} 

\begin{tikzpicture}[x=0.75pt,y=0.75pt,yscale=-1,xscale=1]

\draw [color={rgb, 255:red, 165; green, 165; blue, 165 }  ,draw opacity=1 ][line width=3.75]  [dash pattern={on 9pt off 4.5pt}]  (460,120) -- (270,310) ;
\draw [color={rgb, 255:red, 165; green, 165; blue, 165 }  ,draw opacity=1 ][line width=3.75]    (230,70) -- (470,310) ;
\draw    (210,310) -- (488,310) ;
\draw [shift={(490,310)}, rotate = 180] [color={rgb, 255:red, 0; green, 0; blue, 0 }  ][line width=0.75]    (10.93,-3.29) .. controls (6.95,-1.4) and (3.31,-0.3) .. (0,0) .. controls (3.31,0.3) and (6.95,1.4) .. (10.93,3.29)   ;
\draw  [fill={rgb, 255:red, 241; green, 241; blue, 241 }  ,fill opacity=1 ] (380,220) -- (470,310) -- (290,310) -- cycle ;
\draw  [fill={rgb, 255:red, 241; green, 241; blue, 241 }  ,fill opacity=1 ] (380,140) -- (410,170) -- (380,200) -- (350,170) -- cycle ;
\draw [fill={rgb, 255:red, 241; green, 241; blue, 241 }  ,fill opacity=1 ]   (460,40) -- (380,120) -- (300,40) ;
\draw    (210,310) -- (210,42) ;
\draw [shift={(210,40)}, rotate = 90] [color={rgb, 255:red, 0; green, 0; blue, 0 }  ][line width=0.75]    (10.93,-3.29) .. controls (6.95,-1.4) and (3.31,-0.3) .. (0,0) .. controls (3.31,0.3) and (6.95,1.4) .. (10.93,3.29)   ;
\draw  [dash pattern={on 4.5pt off 4.5pt}]  (380,30) -- (380,310) ;
\draw  [fill={rgb, 255:red, 241; green, 241; blue, 241 }  ,fill opacity=1 ] (340,180) -- (370,210) -- (280,300) -- (250,270) -- cycle ;
\draw [fill={rgb, 255:red, 241; green, 241; blue, 241 }  ,fill opacity=1 ]   (280,40) -- (370,130) -- (340,160) -- (220,40) ;

\draw (201,22.4) node [anchor=north west][inner sep=0.75pt]    {$\ell $};
\draw (481,314.4) node [anchor=north west][inner sep=0.75pt]    {$s$};
\draw (381,52.4) node [anchor=north west][inner sep=0.75pt]    {$( A)$};
\draw (377,162.4) node [anchor=north west][inner sep=0.75pt]    {$( D)$};
\draw (381,282.4) node [anchor=north west][inner sep=0.75pt]    {$( F)$};
\draw (380,315.4) node [anchor=north] [inner sep=0.75pt]    {$\tfrac{1}{2}$};
\draw (292,233.4) node [anchor=north west][inner sep=0.75pt]    {$( E)$};
\draw (281,72.4) node [anchor=north west][inner sep=0.75pt]    {$( B)$};

\end{tikzpicture}

We shall now consider the case when $\pi$ and $\sigma$ vary through $p$-adic families. We consider Coleman families $\underline{\pi}$ for $\GSp_4$ (over some 2-dimensional affinoid space $U \subset \cW \times \cW$, where $\cW$ is the space of characters of $\ZZ_p^\times$), and similarly $\underline{\sigma}$ for $\GL_2$, over a 1-dimensional affinoid $U' \subset \cW$.

Following \cite{LZvista} and \cite{LR2}, we may conjecture that there exist 3 different $p$-adic $L$-functions in $\cO(U  \times U'\times \cW)$, denoted by $\mathcal{L}^{\spadesuit}(\underline{\pi} \times \underline{\sigma}, -)$ for $\spadesuit \in \{ (A), (D), (F)\}$, whose values at integer points $(k_1, k_2, \ell, j)$ satisfying the inequalities \eqref{eq:ineqs_j} interpolate the corresponding complex $L$-values. (These depend on various auxiliary data, which we suppress for now.)

In \cite{LZ21-erl}, building on the earlier work \cite{LPSZ1}, we proved a weakened form of this conjecture for region $(F)$: we constructed a $p$-adic $L$-function over a codimension-1 subspace of the parameter space $U  \times U'\times \cW$, interpolating $L$-values in region $(F)$ and lying at the ``right-hand edge'' of the critical strip. Thus, for each $(k_1, k_2, \ell)$ such that $\ell \le k_1 - k_2 + 1$, our $p$-adic $L$-function captures just one among the (possibly) many critical values of the $L$-function of the weight $(k_1, k_2, \ell)$ specialisation of $\underline{\pi} \times \underline{\sigma}$. This corresponds to the solid grey diagonal line in the above figure. We also showed that certain (non-critical) values of this $p$-adic $L$-function, corresponding to the elongation of the diagonal line to meet region $(E)$, were related to syntomic regulators of Euler system classes constructed in \cite{HJS20}; the region $(E)$ in the above diagram is precisely the range of weights in which the geometric Euler system classes of \emph{op.cit.} are defined.

\begin{note} We would also expect a second Euler system construction for weights in region $(B)$, but this is only conjectural at present.
\end{note}

The goal of this paper is to prove the analogue for region $(D)$ of the first main result proved for region $(F)$ in \cite{LZ21-erl}. That is, we define a $p$-adic $L$-function interpolating $L$-values along the ``lower right edge'' of region $(D)$, i.e.~for $(k_1, k_2, \ell, j)$ satisfying the conditions
\[ k_1 - k_2 + 3 \le \ell \le k_1, \qquad j = k_2 + \ell - 3 \quad \left(\Leftrightarrow\ s = \tfrac{\ell - k_1 + k_2 - 2}{2}\right).\]
So this $p$-adic $L$-function again lives over a codimension 1 subspace of the 4-dimensional parameter space, but a different one from that of \cite{LZ21-erl}: it is indicated by the dotted grey line in the figure. We conjecture, but do not prove here, a relation between this new $p$-adic $L$-function and syntomic regulators in region $(E)$; we hope to return to this in a subsequent work.

\begin{remark}
Both in the present paper and in \cite{LZ21-erl}, the reason why we lose one variable in the construction is that we do not know how to work with \emph{nearly-holomorphic} modular forms in the framework of higher Coleman theory. More precisely, $L$-values anywhere in region $(F)$, and in the ``lower half'' of region $(D)$, can be interpreted algebraically via cup-products in coherent cohomology; but the Eisenstein series appearing in these expressions are only holomorphic if $s$ lies at the upper or lower limit of the allowed range -- otherwise, they are nearly-holomorphic but not holomorphic. We are optimistic that future developments in higher Hida/Coleman theory may circumvent this barrier, allowing the construction of $p$-adic $L$-functions over the full 4-dimensional parameter space with interpolating properties in region $(F)$ or region $(D)$.
\end{remark}

\subsection*{The main result}

It is convenient to re-index the weights by setting $(r_1, r_2) = (k_1 - 3, k_2 - 3)$; for region $(D)$ to be non-empty we need $r_1 \ge r_2 \ge 0$, and in this case, $(r_1, r_2)$ is the highest weight of the algebraic representation of $\GSp_4$ for which $\pi$ is cohomological. Let $\chi_{\pi}$ (resp. $\chi_{\sigma}$) the central character of $\pi$ (resp. $\sigma$).

To define the (imprimitive) $p$-adic $L$-function, we need to consider the following objects. Here, $P$ denotes a point in $U$ and $Q$ a point in $U'$.
\begin{itemize}
\item A set of local conditions encoded in terms of the local data $\gamma_S$, introduced in \S\ref{sec:tame} and \S\ref{sec:ZS}, and appearing in the factor $Z_S(\pi_P \times \sigma_Q, \gamma_S)$.
\item A degree eight Euler factor $\mathcal E^{(D)}(\pi_P \times \sigma_Q)$, where $\pi_P$ (resp. $\sigma_Q$) stands for the specialization of $\underline{\pi}$ (resp. $\underline{\sigma}$) at the point $P$ (resp. $Q$). This is consistent with the predictions of \cite[\S4.3]{LZvista} and the fact that the Galois representation is 8-dimensional. The precise computation and description of the Euler factor is done in \cite[\S7]{LR2}.
\item The completed (complex) $L$-function $\Lambda(\pi_P \times \sigma_Q, s)$.
\item A basis $\underline{\xi} \otimes \underline{\eta}$ of the space $S^1(\underline{\pi}) \otimes S^1(\underline{\sigma})$, as introduced in \cite[Def. 10.4.1]{LZ21-erl}. The $p$-adic $L$-function does depend on that choice.
\item The complex (resp. $p$-adic) period $\Omega_{\infty}(\pi_P, \sigma_Q)$ (resp. $\Omega_p(\pi_P, \sigma_Q)$), introduced in Definition \ref{def:periods} and depending also on the specialization $\xi_P \otimes \eta_Q$ of the canonical differential $\underline{\xi} \otimes \underline{\eta}$ at $(P,Q)$.
\item The Gauss sum attached to $\chi_{\sigma}^{-1}$, denoted by $G(\chi_{\sigma}^{-1})$.
\end{itemize}

Further, we need to introduce the notion of {\it nice critical point}. We say a point $(P,Q)$ of $U \times U'$ is nice if $P = (r_1,r_2)$ and $Q = (\ell)$ are integer points, with $P$ nice for $\underline{\pi}$ and $Q$ nice for $\underline{\sigma}$, according to the definitions of Section \ref{sec:function}. Further, we say $(P,Q)$ is nice critical if we also have $r_1-r_2+3 \leq \ell \leq r_1+3$.

The main theorem we prove in this note, using in a crucial way the algebraicity result of \cite{LR2}, is the following.

\begin{theorem}
There exists a $p$-adic $L$-function $\mathcal L_{p,\gamma_S}^{\imp}(\underline{\pi} \times \underline{\sigma})$ satisfying the following interpolation property: if $(P,Q)$ is nice critical, then \[ \frac{\mathcal L_{p,\gamma_S}^{\imp}(\underline{\pi} \times \underline{\sigma})(P,Q)}{\Omega_p(\pi_P,\sigma_Q)} = Z_S(\pi_P \times \sigma_Q, \gamma_S) \cdot \mathcal E^{(D)}(\pi_P \times \sigma_Q) \cdot \frac{G(\chi_{\sigma}^{-1}) \Lambda(\pi_P \times \sigma_Q, \tfrac{\ell - k_1 + k_2 - 2}{2})}{\Omega_{\infty}(\pi_P, \sigma_Q)}, \]
where $(k_1, k_2, \ell)$ are such that $\pi_P$ has weight $(k_1, k_2)$ and $\sigma_Q$ has weight $\ell$.
\end{theorem}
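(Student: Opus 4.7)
The plan is to parallel the strategy of \cite{LZ21-erl} for region $(F)$, but working now in degree-$1$ coherent cohomology of the Siegel threefold rather than degree $2$. The starting point, supplied by the companion paper \cite{LR2}, is an algebraic formula expressing the classical $L$-value at a nice critical point as a cup-product pairing in coherent cohomology, of shape roughly $\langle \xi_P,\, \eta_Q \cup E_j \rangle$, where $\xi_P$ is an $H^1$-class attached to $\pi_P$, $\eta_Q$ is the coherent realisation of $\sigma_Q$ (pulled back via the relevant embedding of Shimura data), and $E_j$ is an Eisenstein series. The crucial feature of the edge condition $j = k_2 + \ell - 3$ is precisely that $E_j$ is \emph{holomorphic} rather than only nearly-holomorphic, placing the whole expression inside the scope of higher Coleman theory for $\GSp_4$.

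To interpolate this $p$-adically I proceed in three parallel steps. First, I lift $\xi_P$ to a class $\underline{\xi}$ in a higher-Coleman $H^1$ over $U$, using a small-slope classicality theorem applied in degree $1$: the input is that at a nice point the relevant finite-slope eigenspace is one-dimensional and cut out by a non-critical slope condition, so the class extends rigid-analytically along $U$. Second, I interpolate $\eta_Q$ as a family $\underline{\eta}$ over $U'$ by standard overconvergent modular forms. Third, I construct a $p$-adic family of Eisenstein classes whose weight is tied to the slice $j = k_2 + \ell - 3$; this is obtained by $p$-depleting (or Katz-interpolating) the classical $E_j$, and thanks to the edge condition it requires no auxiliary cyclotomic variable, so the family lives naturally over $U \times U'$.

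The $p$-adic $L$-function $\mathcal L_{p,\gamma_S}^{\imp}(\underline{\pi}\times\underline{\sigma})$ is then defined as the cup product of these three families, pulled back along the embedding $\GL_2\times\GL_2 \hookrightarrow \GSp_4 \times \GL_2$ of Shimura data used in \cite{LR2}, and pushed forward to a point. The tame data $\gamma_S$ selects test vectors at the bad primes and contributes the local factor $Z_S$. At a nice critical $(P,Q)$, specialising the family recovers the classical cup product up to three explicit corrections: the $p$-adic period $\Omega_p(\pi_P,\sigma_Q)$, which records the normalisation of $\underline{\xi}\otimes\underline{\eta}$ against the canonical coherent classes; the Euler factor $\mathcal E^{(D)}(\pi_P\times\sigma_Q)$, which reflects the passage from the $p$-new vector to its $p$-stabilisation together with the $p$-depletion of the Eisenstein class; and the Gauss sum $G(\chi_\sigma^{-1})$, arising from the Fourier expansion of $E_j$. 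The algebraicity theorem of \cite{LR2} then identifies the classical cup product with $\Lambda(\pi_P\times\sigma_Q, \tfrac{\ell - k_1 + k_2 - 2}{2}) / \Omega_\infty(\pi_P, \sigma_Q)$, which produces the stated interpolation formula.

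I expect the main obstacle to lie in the higher-Coleman lift of $\underline{\xi}$: unlike the $H^2$ construction of \cite{LZ21-erl}, working in $H^1$ requires a small-slope/classicality criterion valid in degree $1$, together with a cohomological vanishing statement guaranteeing that $\underline{\xi}$ extends over the whole of $U$ without spurious torsion. A secondary difficulty is bookkeeping the Euler factor $\mathcal E^{(D)}$ exactly: one must track all $p$-power normalisations arising from the comparison between the classical, overconvergent, and $p$-depleted realisations, so that the ratio of $p$-adic to classical pairing comes out precisely as $\mathcal E^{(D)}$ rather than merely up to an unspecified rational multiple.
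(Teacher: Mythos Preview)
Your proposal is correct and follows essentially the same approach as the paper: the $p$-adic $L$-function is defined as the higher-Coleman pairing $\langle (\iota^{\sharp})^*(\gamma_{0,S}\cdot\underline{\xi}),\ \mathcal{E}^{\Phi^{(p)}}(0,\htt-1)\boxtimes G(\chi_2^{-1})\underline{\eta}\rangle$, and the interpolation at nice critical $(P,Q)$ follows by specialising and invoking the local zeta-integral computations of \cite{LR2} (which supply both the algebraicity formula and the exact Euler factor $\mathcal{E}^{(D)}$). The paper's substance lies in the apparatus you gloss over---the flag-variety analysis of the twisted embedding $\iota^{\sharp\sharp}$, the branching law for distribution sheaves via the vector $\mathfrak{K}^{\lambda}$, and the K\"unneth/duality pairing on $\mathcal{S}_H$---rather than in the two obstacles you flag, which are handled respectively by the Boxer--Pilloni classicality framework (built into the definition of $\underline{\pi}$) and by \cite[\S7]{LR2}.
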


The approach we follow to establish the theorem is the following:
\begin{enumerate}
\item Use results of Harris and Su to express the automorphic period to be computed as a cup product in the coherent cohomology of a Shimura variety associated with $\GL_2 \times \GL_2$. (This has already been carried out in \cite{LR2}.)
\item Use higher Coleman theory to reinterpret the cup product in terms of a pairing in coherent cohomology over certain strata in the adic Shimura varieties.
\item Use the families of automorphic forms $\underline{\pi}$ and $\underline{\sigma}$ in order to define the $p$-adic $L$-function $\mathcal L_{p,\gamma_S}^{\imp}(\underline{\pi} \times \underline{\sigma}; \underline{\xi})$.
\item Derive an interpolation formula at critical points using the compatibility of the cup-product with specialisation.
\end{enumerate}

\begin{remark}
 For this specific value $s =  \tfrac{\ell - k_1 + k_2 - 2}{2}$, we can write $L(\pi_P\times \sigma_Q, s) = L(V, 0)$, where $V$ is the Galois representation $V(\pi_P) \otimes V(\sigma_Q)(k_2 + \ell - 3)$. This Galois representation always has one of its Hodge--Tate weights equal to 0, which gives an intuitive explanation of why it should be ``easier'' to interpolate $L$-values along this subspace of the parameter space rather than over the entire 4-dimensional parameter space incorporating arbitrary cyclotomic twists.

 If we specialise at a fixed $P$, giving a one-variable $p$-adic $L$-function $L_{p,\gamma_S}^{\imp}(\pi \times \underline{\sigma})$ associated to a fixed $\pi$ and a $\GL_2$ family $\underline{\sigma}$, and we choose this $\underline{\sigma}$ to be a family of ordinary CM forms (arising from an imaginary quadratic field $K$ in which $p$ is split), then $L$-values interpolated by $\mathcal L_{p,\gamma_S}^{\imp}(\underline{\pi} \times \underline{\sigma})$ can be interpreted as values of the $L$-function of $\pi$ twisted by Gr\"ossencharacters of $K$; and the restriction on the value of $s$ implies that the Gr\"ossencharacters arising have infinity-types of the form $(n, 0)$. We expect that this $L$-function should have an interpretation as a ``$\fp$-adic $L$-function'' , interpolating twists by characters of the ray class group of $K$ modulo $\fp^\infty$, for a specific choice of prime $\fp$ above $p$; this will be pursued in more detail elsewhere.
\end{remark}

\subsection*{Connection with other works}

In \cite{LZvista}, the authors work in the setting of cusp forms for the larger group $\GSp_4 \times \GL_2 \times \GL_2$ and conjecture the existence of 6 different $p$-adic $L$-functions interpolating Gross--Prasad periods, corresponding to the `sign $+1$' regions $(a)$, $(a')$, $(c)$, $(d)$, $(d')$ and $(f)$ in the diagrams of \emph{op.cit.}. The case of region $(f)$ was covered by the work of Loeffler--Pilloni--Skinner--Zerbes \cite{LPSZ1} (see also \cite{LZ21-erl}) using higher Hida and Coleman theory, and the $p$-adic $L$-function for region $(c)$ was announced by Bertolini--Seveso--Venerucci, also using tools from coherent cohomology. Note however that the works cited only cover the case when one of the $\GL_2$-forms is an Eisenstein series.

If one formally replaces one of the two cusp forms by an Eisenstein series, then the Gross--Prasad period becomes Novodvorsky's integral computing the degree 8 $L$-function for $\GSp_4 \times \GL_2$; and regions $(a), (b), (d), (e), (f)$ correspond to the regions $(A)$, $(B)$, $(D)$, $(E)$, $(F)$ of the $\GSp_4 \times \GL_2$ figure above (while the arithmetic meaning of the remaining regions $(a'), (b'), (d'), (c)$ is less clear in this case). The methods we develop in the present work for region $(D)$ can be straightforwardly modified to interpolate $\GSp_4 \times \GL_2 \times \GL_2$ Gross--Prasad periods along one edge of region $(d)$ (and its mirror-image $(d')$).

For weights in the ``off-centre'' regions $(B)$ and $(E)$, the complex $L$-value $L(\pi \times \sigma, s)$ vanishes to order precisely 1, due to the shape of the archimedean $\Gamma$-factors. Beilinson's conjecture predicts the existence of canonical motivic cohomology classes whose complex regulators are related to $L'(\pi \times \sigma, s)$; and we expect the images of these classes in $p$-adic \'etale cohomology to form Euler systems. For weights in region (E), an Euler system has been obtained in recent work of Hsu, Jin and Sakamoto \cite{HJS20}; and Zerbes and the first author showed in \cite{LZ20b-regulator} that the syntomic regulators of these classes are related to values (outside its domain of interpolation) of the $p$-adic $L$-function interpolating critical values in region $(F)$. In the last section of this article, we discuss the kind of reciprocity law one can expect relating the cohomology classes of \cite{HJS20} with the $p$-adic $L$-function of this article. We hope to come back to this question in a forthcoming work.

Aside from the works listed above, the only other prior work we know of which treats $p$-adic interpolation of $\GSp_4 \times \GL_2$ $L$-values is the PhD thesis of M.~Agarwal \cite{agarwal}. Agarwal's construction gives a one-variable $p$-adic $L$-function, which appears to correspond to the restriction of our 3-variable function to the line where $k_1 = k_2 = \ell = k$ for a parameter $k$, although his methods are very different from ours (using an Eisenstein series on the unitary group $U(3, 3)$).

Since the release of this preprint, Z. Liu \cite{liu23} has constructed a new $p$-adic $L$-function in the case of $\GSp_4 \times \GL_2$, using the same integral representation of \cite{agarwal}; her construction includes the cyclotomic variable and proceeds in a different way than ours, using Klingen Eisenstein series on $\text{GU}(2,2)$ with Bessel models of representations of $\GSp_4$. Further, the preprint \cite{GPR}, which appeared also after the release of this preprint, develops new tools which are likely to allow the construction of a four variable $p$-adic $L$-function for regions $(D)$ and $(F)$.

One of the differences among the present paper and most of the works discussed above is that here we interpret Siegel modular forms in the $H^1$ of the Siegel modular variety, and not in the $H^2$ as in \cite{LPSZ1}; further, the $\GL_2$-form is naturally seen as an element in the $H^1$ of the modular curve. These choices are a direct reflection of the different weight regions considered in this setting.

\subsection*{Acknowledgements} The authors would like to thank Sarah Zerbes for informative conversations related to this work, and to Andrew Graham for some comments on an earlier version of the paper. They also express their gratitude to the anonymous referee, whose comments contributed to improve the presentation and to correct some inaccuracies.


\section{Setup: groups and Hecke parameters}

 \subsection{Groups}

  We denote by $G$ the group scheme $\GSp_4$ (over $\ZZ$), defined with respect to the anti-diagonal matrix $J = \begin{smatrix}&&&1\\ &&1 \\ &-1 \\ -1 \end{smatrix}$; and we let $\nu$ be the multiplier map $G \to \GG_m$. We define $H = \GL_2 \times_{\GL_1} \GL_2$, which we embed into $G$ via the embedding
  \[ \iota \, : \, \Big[
   \begin{pmatrix}a&b\\c&d\end{pmatrix}, \begin{pmatrix}a'&b'\\c'&d'\end{pmatrix} \Big]
   \mapsto \begin{pmatrix}a&&&b\\&a'&b'\\&c'&d'\\c&&&d\end{pmatrix}.
  \]
  We sometimes write $h_i$ for the $i$-th $\GL_2$ factor of $H$. We write $T$ for the diagonal torus of $G$, which is contained in $H$ and is a maximal torus in either $H$ or $G$.

 \subsection{Parabolics}

  We write $B_G$ for the upper-triangular Borel subgroup of $G$, and $P_{\Si}$ and $P_{\Kl}$ for the standard Siegel and Klingen parabolics containing $B$, so
  \[ P_{\Si} =
   \begin{smatrix}\star & \star & \star & \star \\ \star & \star & \star & \star\\
   && \star & \star\\ && \star & \star \end{smatrix}, \qquad
   P_{\Kl} =
   \begin{smatrix}\star & \star & \star & \star \\  & \star & \star & \star\\
   &\star & \star & \star\\ &&& \star \end{smatrix}.
  \]
  We write $B_H = \iota^{-1}(B_G) = \iota^{-1}(P_{\Si})$ for the upper-triangular Borel of $H$.

  We have a Levi decomposition $P_{\Si} = M_{\Si} N_{\Si}$, with $M_{\Si} \cong \GL_2 \times \GL_1$, identified as a subgroup of $G$ via
  \[
   (A, u) \mapsto \begin{pmatrix}A\\&uA'\end{pmatrix},\qquad A' \coloneqq \begin{pmatrix}&1\\1\end{pmatrix}{}^t\!A^{-1} \begin{pmatrix}&1\\1\end{pmatrix}.
  \]
  In this paper $P_{\Si}$ and $M_{\Si}$ will be much more important than $P_{\Kl}$ and $M_{\Kl}$ (in contrast to \cite{LPSZ1}) so we shall often denote them simply by $P$ and $M$.
  The intersection $B_M \coloneqq M \cap B_G$ is the standard Borel of $M$; its Levi factor is $T$.



\subsection{Coefficient sheaves}

We retain the conventions about algebraic weights and roots of \cite{LZ21-erl}. In particular, we identify characters of $T$ with triples of integers $(r_1,r_2; c)$, with $r_1+r_2 = c$ modulo 2 corresponding to $\diag(st_1,st_2,st_2^{-1},st_1^{-1}) \mapsto t_1^{r_1} t_2^{r_2} s^c$. With our present choices of Borel subgroups, a weight $(r_1,r_2; c)$ is dominant for $H$ if $r_1,r_2 \geq 0$, dominant for $M_G$ if $r_1 \geq r_2$, and dominant for $G$ if both of these conditions hold. (We frequently omit the central character $c$ if it is not important in the context.)

For our further use, we briefly recall the conventions of loc.\,cit. about sheaves. The Weyl group acts on the group of characters $X^*(T)$ via $(w \cdot \lambda)(t) = \lambda(w^{-1}tw)$. As discussed in loc.\,cit., we can define explicitly $w_G^{\max}$, the longest element of the Weyl group, as well as $\rho = (2, 1; 0)$, which is half the sum of the positive roots for $G$. There is a functor from representations of $P_G$ to vector bundles on the compactified Siegel Shimura variety; and we let $\mathcal V_{\kappa}$, for $\kappa \in X^{\bullet}(T)$ that is $M_G$-dominant, be the image of the irreducible $M_G$-representation of highest weight $\kappa$. Given an integral weight $\nu \in X^\bullet(T)$ such that $\nu + \rho$ is dominant, we define \[ \kappa_i(\nu) = w_i(\nu+\rho) - \rho, \quad 0 \leq i \leq 3, \] here, as usual, $\rho$ is half the sum of the positive roots and the $w_i$ stand for the Kostant representatives of the Weyl group. These are the weights $\kappa$ such that representations of infinitesimal character $\nu^{\vee}+\rho$ contribute to $R\Gamma(S_K^{G,\tor},\mathcal V_{\kappa})$, where $\nu^{\vee}$ is the dual weight of $\nu$ and the superscript ``$\tor$'' stands for the toroidal compactification (for some choice of a projective cone decomposition). If $\nu$ is dominant (i.e. $r_1 \ge r_2 \ge 0$), they are the weights which appear in the \emph{dual BGG complex} computing de Rham cohomology with coefficients in the algebraic $G$-representation of highest weight $\nu$.

\subsection{Hecke parameters}

With the notations of the introduction, let $\pi$ be a cuspidal automorphic representation of $G$, and let $p$ be a prime. If $\pi_{\fin}$ is unramified at $p$, we write $\alpha, \, \beta, \, \gamma, \, \delta$ for the Hecke parameters of $\pi_p$, and $P_p(X)$ for the polynomial $(1-\alpha X) \ldots (1-\delta X)$. The Hecke parameters are algebraic integers over a number field $E$, and are well-defined up to the action of the Weyl group. If $\pi$ is non-CAP, which we shall assume\footnote{Equivalently (given our conditions on the weight), we require that $\pi$ is not a Saito--Kurokawa lift; for lifts of this type, the ratio of two among the Hecke parameters is $p$, so they cannot all have the same absolute value. Including Saito--Kurokawa lifts would add extra technical complications in our theory; and excluding them is no loss anyway, since if $\pi$ is such a lift, then $L(\pi \times \sigma, s)$ factors as a product of $L$-functions of $\GL_2$ and $\GL_2 \times \GL_2$, whose $p$-adic interpolation is well-understood.}, then the Hecke parameters all have complex absolute value $p^{w/2}$, where $w \coloneqq r_1+r_2+3$, and they satisfy $\alpha \delta = \beta \gamma = p^w \chi_{\pi}(p)$, where $\chi_{\pi}(p)$ is a root of unity.


Let $\Iw_G(p)$ denote the Iwahori subgroup. We shall consider the following operators in the Hecke algebra of level $\Iw_G(p)$, acting on the cohomology of any of the sheaves introduced before:
\begin{itemize}
\item The Siegel operator $\mathcal U_{\Si} = [\diag(p,p,1,1)]$, as well as its dual $\mathcal U_{\Si}' = [\diag(1,1,p,p)]$.
\item The Klingen operator $\mathcal U_{\Kl} = p^{-r_2} \cdot [\diag(p^2,p,p,1)]$, as well as its dual $\mathcal U_{\Kl}' = p^{-r_2} \cdot [\diag(1,p,p,p^2)]$.
\item The Borel operator $\mathcal U_B = \mathcal U_{\Si} \cdot \mathcal U_{\Kl}$, as well as its dual $\mathcal U_B' = \mathcal U_{\Si}' \cdot \mathcal U_{\Kl}'$.
\end{itemize}



\section{Flag varieties and orbits}

The key technical input into our interpolation results is a detailed study of certain loci in flag varieties for $G$ and $H$, making explicit the theory of \cite{boxerpilloni20} for the groups $G$ and $H$, and studying how it interacts with restriction from $G$ to $H$.

We use the usual Roman letters $X, Y, \dots$ for algebraic varieties or schemes, while calligraphic letters $\cX, \cY, \dots$ or typewriter letters ${\tt X}, {\tt Y}, \dots$ denote adic spaces.

\subsection{Kostant representatives}

We write $\FL_G$ for the Siegel flag variety $P \backslash G$, with its natural right $G$-action. There are four orbits for the Borel $B_G$ acting on $\FL_G$, represented by a subset of the Weyl group of $G$, the \emph{Kostant representatives} (a distinguished set of representatives for the quotient $W_{M_G} \backslash W_G$ where $M_G$ is the Levi of $P$). We denote these by $w_0, \dots, w_3$; see \cite{LZ21-erl} for explicit matrices.

\begin{definition}
 As in \cite[\S 3.1]{boxerpilloni20}, we write $C_{w_i}^G$ for the orbit $P \backslash P w_i B_G$, a locally closed subvariety of $\FL_G$ of dimension $i$.
\end{definition}

\begin{remark}
For $g \in G$, we can determine which cell $C_{w_i}^G$ contains the point $Pg \in \FL_G$ via a criterion in terms of the span of the rows of the bottom left $2\times 2$ submatrix of $g$, as in Remark 5.1.2 of \cite{LZ21-erl}.
\end{remark}

\begin{remark}
Note that $C_{w_0}^G$ and $C_{w_3}^G$ are stable under $P$, while $C_{w_1}^G \sqcup C_{w_2}^G$ forms a single $P$-orbit.
\end{remark}

Analogously, for the $H$-flag variety $\FL_H = B_H \backslash H$, we have 4 Kostant representatives $w_{00} = \mathrm{id}$, $w_{10} = \left(\begin{pmatrix}0&1\\-1&0\end{pmatrix}, \mathrm{id}\right)$, similarly $w_{01}$, $w_{11}$ (with the cell $C_{w_{ij}}^H$ having dimension $i + j$). (This is the whole of the Weyl group of $H$, since the Levi subgroup of $M_H = T$ is trivial.)

\begin{remark}
Either for $G$ or for $H$, each cell will determine a subspace of the Iwahori-level Shimura variety (as an adic space), via pullback along the Hodge--Tate period map. This is the locus where the relative position of the Hodge filtration and level structure on the $p$-divisible group lies in the given Bruhat cell. In particular, the ``smallest'' cell ($w_0$ or $w_{00}$) corresponds to the multiplicative locus, and the ``largest'' one to the \'etale locus.
\end{remark}

\subsection{A twisted embedding of flag varieties}\label{sec:twisted}

 Consider the elements
 \begin{align*}
  \tau &= \begin{smatrix}1 \\1&1\\&&1\\&&-1&1\end{smatrix} \in M(\Zp),
  &
 \tau^{\sharp} &= \iota(w_{01})^{-1} \tau w_2 \in G(\Zp).
 \end{align*}

 Note that $\tau$ was denoted $\gamma$ in \cite{LPSZ1}, but $\gamma$ was also used for a Satake parameter, so we use a different letter here. The element $\tau$ represents the unique open $T$-orbit for the $M$-flag variety $B_M \backslash M$.

 We will consider the translated embedding $\iota^\sharp: H \to G$ given by $h \mapsto \iota(h) \tau^\sharp$. The map $\FL_H \to \FL_G$ induced by $\iota^{\sharp}$ by construction sends $[w_{01}]$ to $[w_2]$. We also have projection maps $\pi_i: \FL_H \to \FL_{\GL_2} \cong \mathbf{P}^1$, and the product $(\iota^\sharp, \pi_1, \pi_2)$ evidently sends $w_{01}$ to $([w_2], [\mathrm{id}], [w])$ (where the unlabelled $w$ is the $\GL_2$ long Weyl element).

 If we equate $(x : y) \in \mathbf{P}^1$ with the orbit $B g \in B \backslash G$, where $g$ is any invertible matrix of the form $\stbt{\star}{\star}{x}{y}$, then $\iota^\sharp$ sends $\left((x: y), (X : Y)\right)$ to
 \[ P_{\Si} \cdot \begin{pmatrix}
 \star & \dots & \dots & \star\\
 \star & \dots & \dots & \star\\
 -X & Y & \star  & \star\\
 -y & x & \star & \star \end{pmatrix}.
 \]
 Using this and the explicit description of the Bruhat cells in terms of the bottom left corner of the matrix, we see that:
 \begin{itemize}
 \item the preimage of $C^G_{w_0}$ is empty;
 \item the preimage of $C^G_{w_1}$ is the point $((1:0), (0:1))$ (the image of $[w_{10}] \in \FL_H$);
 \item the preimage of $C^G_{w_2}$ is a copy of the affine line, corresponding to points of the form
 \[ B_H \left(\stbt 1 {} x 1, \stbt 1 {}{-x} 1\right) w_{01}.\]
 \end{itemize}

 \begin{proof}
  The condition for the above matrix to lie in $X^G_{w_2}$ is that $\stbt{-X}{Y}{-y}{x}$ be singular, i.e.~$Xx = Yy$; and the condition for it to lie in $C^G_{w_2}$ is that the span of the rows not be $(0:1)$, so $X \ne 0$ and $y \ne 0$. So we can wlog take $X = 1$ and $y = 1$, leaving the equation $Y = x$; i.e. our point was $B_H \left( \stbt{\star}{\star}{x}{1}, \stbt\star\star 1 x \right) = B_H \left( \stbt{\star}{\star}{x}{1}, \stbt\star\star {-x}{1}\right) w_{01}$.
 \end{proof}

 \begin{notation}
  We write $X^G_{w} = \bigcup_{w' \le w} C^G_{w'}$ (closed subvariety), and $Y^G_w = \bigcup_{w' \ge w} C^G_{w'}$ (open subvariety).
 \end{notation}

\begin{proposition}
 We have
 \[
  (\iota^\sharp)^{-1}\left( X^G_{w_2}\right) \cap \pi_2^{-1}\left( Y^{\GL_2}_{w} \right) =  (\iota^\sharp)^{-1}\left( C^G_{w_2}\right) \cap \pi_1^{-1}(C^{\GL_2}_w \cdot w^{-1}) \cap \pi_2^{-1}\left( C_{w}^{\GL_2} \right).
 \]
\end{proposition}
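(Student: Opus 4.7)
The key inputs are laid out in the paragraphs just above the statement: a stratification $X^G_{w_2} = C^G_{w_0} \sqcup C^G_{w_1} \sqcup C^G_{w_2}$ together with explicit descriptions of the preimages of each stratum under $\iota^{\sharp}$ (empty, one point, and an affine line respectively). I would proceed by a stratum-by-stratum analysis, checking on each piece whether the side-conditions on the left ($\pi_2^{-1}(Y^{\GL_2}_w)$) and on the right ($\pi_1^{-1}(C^{\GL_2}_w \cdot w^{-1}) \cap \pi_2^{-1}(C^{\GL_2}_w)$) are satisfied.

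First I would record that for $\GL_2$ the set of Kostant representatives is $\{\mathrm{id}, w\}$, so $Y^{\GL_2}_w = C^{\GL_2}_w$; and, using the identification of a point of $B \backslash \GL_2$ with its bottom-row class in $\mathbf{P}^1$, one sees that $C^{\GL_2}_w = \mathbf{P}^1 \setminus \{(0:1)\}$ and (by a direct calculation of right translation) $C^{\GL_2}_w \cdot w^{-1} = \mathbf{P}^1 \setminus \{(1:0)\}$. Then I would run through the three strata: the preimage of $C^G_{w_0}$ is empty; the preimage of $C^G_{w_1}$ is the single point $((1:0),(0:1))$, whose second coordinate is $(0:1) \notin C^{\GL_2}_w$, so this point lies on neither side of the claimed equality; finally, on the affine line $(\iota^{\sharp})^{-1}(C^G_{w_2})$ parametrised by $x \mapsto B_H \bigl(\stbt 1 {} x 1, \stbt 1 {} {-x} 1 \bigr) w_{01}$, a direct multiplication (using that $w_{01}$ acts as $w$ in the second $\GL_2$ factor) gives $\pi_1 = (x:1)$ and $\pi_2 = (1:x)$.

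On this affine line the inequalities $\pi_1 \neq (1:0)$ and $\pi_2 \neq (0:1)$ hold for every $x$, so the entire affine line lies inside $\pi_1^{-1}(C^{\GL_2}_w \cdot w^{-1}) \cap \pi_2^{-1}(C^{\GL_2}_w)$. Combining the three cases, both sides of the claimed equality reduce to $(\iota^{\sharp})^{-1}(C^G_{w_2})$, proving the proposition.

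The only delicate point is book-keeping: one must be consistent about the chosen representative for the Weyl element $w$ and the direction of the right action on $B \backslash \GL_2$, since an accidental swap would exchange $(1:0)$ and $(0:1)$ and corrupt the conclusion. Beyond this, no conceptual obstacle arises.
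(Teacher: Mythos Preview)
Your proposal is correct and follows essentially the same argument as the paper: both reduce the left-hand side to $(\iota^{\sharp})^{-1}(C^G_{w_2})$ by observing that the single point $[w_{10}]$ in the preimage of the lower strata has $\pi_2$-image $(0:1) \notin Y^{\GL_2}_w$, and then verify that the explicit parametrisation of this affine line forces $\pi_1 \in C^{\GL_2}_w \cdot w^{-1}$ and $\pi_2 \in C^{\GL_2}_w$ automatically. Your write-up is in fact more explicit than the paper's about the identifications $C^{\GL_2}_w = \mathbf{P}^1 \setminus \{(0:1)\}$ and $C^{\GL_2}_w \cdot w^{-1} = \mathbf{P}^1 \setminus \{(1:0)\}$, which is helpful.
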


(Note that the translate $C^{\GL_2}_w \cdot w^{-1}$ is the ``big cell at the origin'', $B \backslash B \overline{B}$. )

\begin{proof}
 Since the single point $(\iota^{\sharp})^{-1}(C^G_{w_0} \cup C^G_{w_1}) = [w_{10}]$ does not map to $Y_w^{\GL_2}$ under $\pi_2$, we conclude that $(\iota^\sharp)^{-1}\left( X^G_{w_2}\right) \cap \pi_2^{-1}\left( Y^{\GL_2}_{w} \right)$ is equal to $(\iota^\sharp)^{-1}(C^G_{w_2})$. We saw above that this subvariety is a copy of the affine line, and its image under the $\pi_i$ is as stated.
\end{proof}

\subsection{Some tubes}

Let ${\tt FL}_G$ denote the analytification of $\FL_G$, as an adic space over $\Qp$, and similarly for $H$ and for $G \times H$, so ${\tt FL}_{(G \times H)} = {\tt FL}_G \times\mathbf{P}^{1, \an} \times \mathbf{P}^{1, \an}$.

We will now define loci inside these spaces, using the tubes of various subvarieties of the special fibres. As usual $\cX^G_w$ denotes the tube of $X^G_{w, \FF_p}$ in ${\tt FL}_G$, etc. We shall set
 \[ \tZ_0 = \overline{\cX^G_{w_2}} \times \mathbf{P}^{1, \an} \times \overline{\cY^{\GL_2}_w}, \]
 and
 \[ \tU_0 = \cY^G_{w_2} \times \cX_{\mathrm{id}}^{\GL_2} \times \mathbf{P}^{1, \an}.\]
 Then $\tZ_0$ is closed, $\tU_0$ is open, and both are stable under the action of $\Iw_G \times \Iw_{\GL_2} \times \Iw_{\GL_2}$; and $\tU_0 \cap \tZ_0$ is a partial closure of the $(w_2, \mathrm{id}, w)$ Bruhat cell for $G \times \GL_2 \times \GL_2$.

 We need to allow smaller ``overconvergence radii'', for which we use the action of the element $\eta_G = \diag(p^3, p^2, p, 1)$ and its cousin $\eta = \stbt{p}{}{}{1}$.

 \begin{definition}
  Let us set
  \( \tZ_m = \tZ_m^G \times \tZ_m^H, \)
  where
  \[ \tZ_m^G = \overline{\cX^G_{w_2}} \cdot \eta_G^m, \qquad\tZ_m^H = \mathbf{P}^{1, \an} \times \left(\overline{\cY^{\GL_2}_w} \cdot \eta^{-m} \stbt{1}{\Zp}{0}{1}\right).\]
 \end{definition}

 We have $\tZ_0 \supseteq \tZ_1 \supseteq \tZ_2 \dots$, by \cite[Lemma 3.4.1]{boxerpilloni20}, and $\tZ_m$ is stable under $\Iw(p^t)$ for $t \ge 3m + 1$.

 On the other hand, we can define $\tU_n = \tU_n^G \times \tU_n^H$, where
 \[ \tU_n^G = \cY^{G}_{w_2} \cdot \eta_G^{-m} N_{B_G}(\Zp), \qquad \tU_n^H = \left( \cX^{\GL_2}_{\mathrm{id}} \cdot \eta^n \right) \times \mathbf{P}^{1, \an}.\]
 Again, we have $\tU_0 \supseteq \tU_1 \supseteq \dots$, and $\tU_n$ is stable under $\Iw(p^t)$ for $t \ge n+1$.

\subsection{Explicit coordinates}

 For $m \in \QQ$, we define the subsets of the adic projective line given by \[ \mathcal B_m = \{ |.| \, : \, |z| \leq p^m \}, \quad \bar{\mathcal B}_m = \cap_{m'<m} \mathcal B_{m'}, \quad \mathcal B_m^{\circ} = \cup_{m'>m} \mathcal B_{m'}, \quad \bar{\mathcal B}_m^{\circ} = \{ |.| \, : \, |z|<|p|^m \}, \] that satisfy the inclusions $\mathcal B_m^{\circ} \subset \bar{\mathcal B}_m^{\circ} \subset \mathcal B_m \subset \bar{\mathcal B}_m$.

 We can identify the Zariski-open neighbourhood $U_{w_2} = P \backslash P \overline{P} w_2$ of $[w_2] \in \FL_G$ with $\AA^3$, via the map $P \backslash P\begin{smatrix}1\\ &1 \\  x & y & 1\\ z & x & & 1\end{smatrix} w_2$. Then one computes that
 \[ \overline{\cX^G_{w_2}} \cap U_{w_2}^{\mathrm{an}} =
  \left\{ (x, y, z): x \notin \cB_0 \text{ or } y \notin \cB_0 \text{ or } z \in \overline{\cB}_0^\circ\right\},\]
 and $\eta_G$ preserves $U_{w_2}^{\an}$ and acts in these coordinates via $(x, y, z) \mapsto (p^{-1} x, p^{-3}y, pz)$. Thus
 \[ \tZ_m^G \cap U_{w_2}^{\mathrm{an}} =
 \left\{ (x, y, z): x \notin \cB_{-m} \text{ or } y \notin \cB_{-3m} \text{ or } z \in \overline{\cB}_m^\circ\right\},\]
 and a similar computation identifies $\overline{\cY^{\GL_2}_w}$ with $\overline{\cB}_0$, and $\overline{\cY^{\GL_2}_w} \cdot \eta^{-m}\stbt{1}{\Zp}{0}{1}$ with $\overline{\cB}_m + \Zp$.

we can compute $\tU_n^G$ in coordinates as
 \[ \tU_n^G \cap U_{w_2}^{\an} = \left\{ (x, y, z): x \in \cB_n + \Zp, y \in \cB_{3n} + \Zp\right\},\]
 with no condition on $z$; and the projection to the first $\GL_2$ coordinate is just $\cB^\circ_n$.

 \begin{lemma}
  The intersection $\tZ^G_m \cap \tU_n^G$ is contained in $U^{\an}_{w_2}$, for all $m, n \ge 0$.
 \end{lemma}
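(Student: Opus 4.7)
The plan is to reduce the general case to $m=0$ using the right $\eta_G$-invariance of $U_{w_2}^{\an}$, and then to identify the intersection with a single tube using the nesting $\tU_k^G \subseteq \tU_0^G = \cY^G_{w_2}$ together with the Bruhat identity $X^G_{w_2}\cap Y^G_{w_2}=C^G_{w_2}$.

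First I would verify that $U_{w_2}^{\an}$ is stable under right multiplication by $\eta_G$. Since $\eta_G\in T\subseteq\overline P$ and $w_2\eta_Gw_2^{-1}\in T\subseteq\overline P$, one has $P\overline P w_2\eta_G = P\overline P(w_2\eta_Gw_2^{-1})w_2 = P\overline P w_2$. Using this invariance, the lemma is equivalent to $(\tZ_m^G\cap\tU_n^G)\cdot\eta_G^{-m}\subseteq U_{w_2}^{\an}$. By definition $\tZ_m^G\cdot\eta_G^{-m}=\overline{\cX^G_{w_2}}$; and for $m\ge 0$, conjugation by $\eta_G^m=\diag(p^{3m},p^{2m},p^m,1)$ scales each off-diagonal entry of $N_{B_G}(\Zp)$ by a non-negative power of $p$, giving $\eta_G^m N_{B_G}(\Zp)\eta_G^{-m}\subseteq N_{B_G}(\Zp)$, or equivalently $N_{B_G}(\Zp)\cdot\eta_G^{-m}\subseteq\eta_G^{-m}\cdot N_{B_G}(\Zp)$. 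Hence
\[
\tU_n^G\cdot\eta_G^{-m}\subseteq\cY^G_{w_2}\cdot\eta_G^{-(n+m)}N_{B_G}(\Zp)=\tU_{n+m}^G,
\]
and the lemma reduces to showing $\overline{\cX^G_{w_2}}\cap\tU_k^G\subseteq U_{w_2}^{\an}$ for every $k\ge 0$.

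For this reduced statement, the nesting $\tU_0^G\supseteq\tU_1^G\supseteq\dotsb$ already recorded above gives $\tU_k^G\subseteq\cY^G_{w_2}$. Since tubes commute with intersections and the Bruhat cell decomposition gives $X^G_{w_2}\cap Y^G_{w_2}=C^G_{w_2}$, we obtain $\overline{\cX^G_{w_2}}\cap\tU_k^G\subseteq\cC^G_{w_2}$, the tube of the single Schubert cell $C^G_{w_2}$. It then suffices to show $\cC^G_{w_2}\subseteq U_{w_2}^{\an}$, which by definition of the tube reduces to $C^G_{w_2}\subseteq U_{w_2}$. This is the standard containment of a Schubert cell in its own opposite big-cell chart: writing $PwB_G=PwN$, the factorisation $wNw^{-1}=(wNw^{-1}\cap N)(wNw^{-1}\cap\overline N)\subseteq N\cdot\overline N\subseteq P\cdot\overline P$ yields $PwN\subseteq P\overline P w$, hence $C^G_w\subseteq U_w$ for any Kostant representative $w$.

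I do not expect a serious obstacle. The four moves (right-invariance of $U_{w_2}^{\an}$ under $T$, the dominance estimate for $\eta_G^m$ on $N_{B_G}(\Zp)$, the nesting of the $\tU_k^G$'s, and the Bruhat containment $C^G_w\subseteq U_w$) are each either a direct manipulation or a standard input. The most delicate point is the dominance estimate, which uses in an essential way that $m\ge 0$ so that conjugation by $\eta_G^m$ contracts rather than expands the Iwahori unipotent; this is where the hypothesis $m,n\ge 0$ in the statement genuinely enters.
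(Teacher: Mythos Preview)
Your argument is correct and follows the same overall strategy as the paper: reduce to the base case $(m,n)=(0,0)$ and then handle that case. The paper's proof is a one-line citation (``It suffices to check this for $(m,n)=(0,0)$; see Lemma~3.3.21 of \cite{boxerpilloni20}''), whereas you spell out both steps explicitly.

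Your reduction is the same as the paper's, just unpacked: translating by $\eta_G^{-m}$ and using the dominance estimate $\eta_G^m N_{B_G}(\Zp)\eta_G^{-m}\subseteq N_{B_G}(\Zp)$ together with the nesting $\tU_k^G\subseteq\tU_0^G$ lands you in $\tZ_0^G\cap\tU_0^G=\overline{\cX^G_{w_2}}\cap\cY^G_{w_2}$. For the base case you give a direct argument rather than citing Boxer--Pilloni, which is a genuine addition. One phrasing to tighten: ``tubes commute with intersections'' is loose. What you are really using is that, with the Boxer--Pilloni conventions, $\overline{\cX^G_{w_2}}=\mathrm{sp}^{-1}(X^G_{w_2,\FF_p})$ and $\cY^G_{w_2}=\mathrm{sp}^{-1}(Y^G_{w_2,\FF_p})$ (the latter because $\cY^G_{w_2}$ is by definition the complement of $\overline{\cX^G_{w_1}}$), so their intersection is $\mathrm{sp}^{-1}(C^G_{w_2,\FF_p})$; and since $C^G_{w_2}\subseteq U_{w_2}$ already over $\Zp$, this preimage lies in $U_{w_2}^{\an}$. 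Stated this way the step is immediate and avoids any ambiguity about what ``$\cC^G_{w_2}$'' means for a locally closed stratum.
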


 \begin{proof}
  It suffices to check this for $(m, n) = (0, 0)$; see Lemma 3.3.21 of \cite{boxerpilloni20}.
 \end{proof}

\subsection{Pullback to $H$}

 Guided by the zeta-integral computations of \cite{LR2}, we shall consider the map
 \[ \iota^{\sharp\sharp}: \FL_H \to \FL_G \times \FL_H,\qquad
 h \mapsto \left(\iota^\sharp(h), h_1 \stbt{p^t}{}{}1, h_2\right). \]
 for some $t \ge 1$.

 \begin{proposition}
  If $m > 3n \ge 0$, then
  \[ (\iota^{\sharp\sharp})^{-1}(\tZ_m \cap \tU_n) = (\iota^{\sharp\sharp})^{-1}(\tZ_m), \]
  and in particular this preimage is closed in $FL_H$.
 \end{proposition}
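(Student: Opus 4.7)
The plan is to establish the inclusion $(\iota^{\sharp\sharp})^{-1}(\tZ_m) \subseteq (\iota^{\sharp\sharp})^{-1}(\tU_n)$; the reverse inclusion is automatic, and the final closedness assertion is then immediate from the closedness of $\tZ_m$ and the continuity of $\iota^{\sharp\sharp}$. The argument combines a reduction to an explicit Bruhat chart with a coordinate computation.

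First, I would localise $\iota^\sharp(h)$ to the open chart $U_{w_2}^{\an}$. Any $h$ with $\iota^{\sharp\sharp}(h) \in \tZ_m$ satisfies $\iota^\sharp(h) \in \overline{\cX^G_{w_2}}$ and $h_2 \in \overline{\cY^{\GL_2}_w} \cdot \eta^{-m} \stbt{1}{\Zp}{0}{1}$. Since $\eta^{-m}\stbt{1}{z}{0}{1}$ lies in the Iwahori for $z \in \Zp$, passing to the special fibre gives $\iota^\sharp(h) \in X^G_{w_2}$ and $h_2 \in Y^{\GL_2}_w$; the proposition in \S\ref{sec:twisted} then forces the reduction of $h$ into the affine line $(\iota^\sharp)^{-1}(C^G_{w_2})$. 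In particular $\iota^\sharp(h)$ reduces into $C^G_{w_2} \subset U_{w_2}$, so $\iota^\sharp(h) \in U_{w_2}^{\an}$, and we may work in the local coordinates $(x,y,z)$ of \S3.4.

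Next, I would make $\iota^{\sharp\sharp}$ explicit in these coordinates. Starting from $\iota^\sharp(h) = \iota(h)\tau^\sharp$ with $\tau^\sharp = \iota(w_{01})^{-1}\tau w_2$, a direct matrix calculation yields polynomial formulas for $(x,y,z)$ in terms of a parameter $x_0$ for the affine line $(\iota^\sharp)^{-1}(C^G_{w_2})$ together with deformation variables $u_1,u_2$ measuring the position of $h_1,h_2$ transverse to that line. The expected structural features are that both $y$ and $z$ vanish on the line, that $z$ is essentially $u_2$ up to units, and that $y$ is a polynomial combination of $u_1, u_2$ in which $u_1$ enters with an extra factor of $p^t$ coming from the twist $\stbt{p^t}{}{}1$ in $\iota^{\sharp\sharp}$. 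Parallel computations give the coordinates of $h_1\stbt{p^t}{}{}1$ and $h_2$ in $\mathbf{P}^{1,\an}$, and the coordinate descriptions of $\tZ_m$ and $\tU_n$ from \S3.4 then translate both containments into ultrametric inequalities in $x_0, u_1, u_2$.

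Finally, I would perform the inequality bookkeeping. The $\tZ_m^H$-condition on $h_2$ pins $|u_2|$ to radius $p^{-m}$. In the disjunction ``$x \notin \cB_{-m}$ or $y \notin \cB_{-3m}$ or $z \in \overline{\cB}_m^\circ$'' defining $\tZ_m^G$, the first two alternatives are precluded automatically because being in the tube of the affine line keeps $|x|, |y| \le 1$, so the active constraint is $z \in \overline{\cB}_m^\circ$, i.e. $|z| < p^{-m}$, which reinforces the bound on $u_2$. Propagating these radii through the polynomial formulas, and using the hypothesis $m > 3n$, one checks $x \in \cB_n + \Zp$, $y \in \cB_{3n} + \Zp$, and $h_1\stbt{p^t}{}{}1 \in \cX_{\mathrm{id}}^{\GL_2}\cdot \eta^n$, i.e.\ $\iota^{\sharp\sharp}(h) \in \tU_n$. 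The factor $3$ reflects the cubic scaling of $y$ relative to $z$ under $\eta_G$. The main obstacle throughout is the explicit matrix calculation of $\iota^\sharp$ in local coordinates; once this is carried out, the remainder is elementary non-archimedean arithmetic, and the sharp threshold $m > 3n$ emerges naturally from the cubic scaling.
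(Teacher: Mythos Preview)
Your strategy matches the paper's: reduce to the big cell, compute the map in explicit coordinates, and check that the $\tZ_m$ inequalities force the $\tU_n$ inequalities when $m > 3n$. However, there are two points where your execution wobbles.

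First, your reduction to the big cell contains a false step: $\eta^{-m}\stbt{1}{z}{0}{1} = \stbt{p^{-m}}{p^{-m}z}{0}{1}$ is \emph{not} in the Iwahori for $m > 0$ (it is not even in $\GL_2(\Zp)$), so you cannot pass to the special fibre that way. The correct (and simpler) argument, which the paper uses, is just that $\tZ_m \subseteq \tZ_0$, and the pullback of $\tZ_0$ was already shown to lie in the big cell by the proposition in \S\ref{sec:twisted}.

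Second, your coordinate scheme $(x_0, u_1, u_2)$ over-parametrises a two-dimensional space: the affine line is one-dimensional inside $\FL_H$, so there is only one transverse direction, not two. The paper works directly with the natural coordinates $(z_1, z_2)$ on the big cell of $\FL_H$ (one for each $\GL_2$ factor), computes that the $\tZ_m$ conditions pull back to
\[ z_1 + z_2 \in \overline{\cB_m^\circ}, \qquad z_2 \in \overline{\cB}_m + \Zp, \]
and that the additional $\tU_n$ conditions pull back to
\[ z_2 \in \cB_{3n} + \Zp, \qquad p^t z_1 \in \cB_n^\circ. \]
The implication for $m > 3n$ is then a two-line check. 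Your detour through the $(x,y,z)$ chart on $\FL_G$ and the disjunction defining $\tZ_m^G$ is unnecessary: once you have the explicit formula for $\iota^\sharp$ in $(z_1,z_2)$, the disjunction collapses automatically and you never need to argue separately that ``$|x|,|y|\le 1$ on the tube of the line''.
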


 \begin{proof}
  We know that the pullback of $\tZ_0$ is contained in the big cell, so we can compute it in coordinates. We find that the inequalities on $(z_1, z_2)$ for it to land in $\tZ_m$ are:
  \[ z_1 + z_2 \in \overline{\cB_m^\circ}, \qquad z_2 \in \overline{\cB_m} + \Zp.\]
  For $\tZ_m \cap \tU_n$ we add the extra inequalities
  \[ z_2 \in \cB_{3n} + \Zp, \qquad p^t z_1 \in \cB^\circ_n.\]
  If $m > 3n$ then the latter equations are a consequence of the former.
 \end{proof}

\subsection{Period maps and overconvergent cohomology}

We consider the analytifications $\mathcal S_{G,K} = (S_K \times \Spec(\QQ_p))^{\an}$ and $\mathcal S_{G,K}^{\tor} = (S_{G,K}^{\tor} \times \Spec(\QQ_p))^{\an}$, and similarly for $H$ and $G \times H$ (denoted always by calligraphic letters). Write $\mathcal S_{G,K^p}^{\tor}$ for the perfectoid space $\varprojlim_{K_p} \mathcal S_{G,K^pK_p}^{\tor}$, which allows us to consider the Hodge--Tate period map \[ \pi_{\HT,G}^{\tor} : \mathcal S_{G,K^p}^{\tor} \longrightarrow {\tt FL}_G, \] which for every open compact $K_p \subset G(\QQ_p)$ descends to a map of topological spaces (c.f. \cite[\S4.5]{boxerpilloni20} \[ \pi_{\HT,G,K_p}^{\tor} : \mathcal S_{G,K^pK_p}^{\tor} \longrightarrow {\tt FL}_G/K_p. \] There are also analogous maps for $H$ and $G \times H$, for which we use the same notations.

Then \cite[Thm. 6.2.1]{LZ21-erl} about the compatibility properties of the period map holds in the same way, replacing the level structure $K_{\diamond}^H$ by $K_{\triangle}^H$, defined as follows.

\begin{definition}
Let $K_{\triangle}^H(p^t) = K_{\Iw}^H(p^t) \cap \tau^{\sharp} K_{\Iw}^G(p^t) (\tau^{\sharp})^{-1}$. It is concretely given by \[ K_{\triangle}^H(p^t) = \{ h \in H(\ZZ_p) \mid h = \Big( \begin{pmatrix}x&0\\0&z\end{pmatrix}, \begin{pmatrix}z&0\\0&x\end{pmatrix} \Big) \mod p^t \text{ for some } x,\,z \}. \]
\end{definition}

We just put $\mathcal S_{H,\triangle}^{\tor}$ for $\mathcal S_{G,K}^{\tor}$ to represent the choice of $K_{\triangle}^H(p^t)$.

\begin{proposition}
There is a commutative diagram of Hodge--Tate period maps \[
\xymatrix{
    \mathcal S_{H,\Iw}^{\tor}(p^t) \ar[r]^-{\pi_{\Iw}^H}
    & {\tt FL}_H/K_{H,\Iw}(p^t) \\
    \mathcal S_{H,\triangle}^{\tor}(p^t)
    \ar[u]^{\pr_{\triangle}}
    \ar[d]^{\iota^\sharp}
    \ar[r]^-{\pi_{\triangle}^H}
    & {\tt FL}_H/K_{\triangle}^H(p^t)
    \ar[u]^{\pr_{\triangle}}
    \ar[d]^{\iota^\sharp} \\
    \mathcal S_{G,\Iw}^{\tor}(p^t) \ar[r]^-{\pi_{\Iw}^G}
    & {\tt FL}_G/K_{G,\Iw}(p^t)
}
\]

\end{proposition}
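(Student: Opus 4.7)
The plan is to verify the two squares of the diagram separately, each being essentially a consequence of the functoriality of the Hodge--Tate period map, together with the careful definition of $K_\triangle^H(p^t)$.

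For the upper square, the inclusion $K_\triangle^H(p^t) \subset K_\Iw^H(p^t)$ induces the forgetful map $\pr_\triangle$ on the Shimura-variety side, and the quotient map ${\tt FL}_H/K_\triangle^H(p^t) \to {\tt FL}_H/K_\Iw^H(p^t)$ on the flag-variety side. Both finite-level maps $\pi_\Iw^H$ and $\pi_\triangle^H$ are obtained by descending the single perfectoid-level Hodge--Tate period map $\pi_{\HT,H}^\tor \colon \mathcal S_{H,K^p}^\tor \to {\tt FL}_H$ along the respective subgroups; functoriality of this descent gives the commutativity of the upper square without any further calculation.

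For the lower square, the key observation is that the definition $K_\triangle^H(p^t) = K_\Iw^H(p^t) \cap \tau^\sharp K_\Iw^G(p^t) (\tau^\sharp)^{-1}$ is engineered precisely so that $(\tau^\sharp)^{-1} \iota(K_\triangle^H(p^t)) \tau^\sharp \subset K_\Iw^G(p^t)$. This is exactly what is needed in order for the twisted embedding $\iota^\sharp \colon h \mapsto \iota(h) \tau^\sharp$ to descend to a well-defined morphism of finite-level Shimura varieties $\mathcal S_{H,\triangle}^\tor(p^t) \to \mathcal S_{G,\Iw}^\tor(p^t)$, and analogously on the quotient flag varieties. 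The commutativity then follows by combining two facts: (i) functoriality of the Hodge--Tate period map under the group homomorphism $\iota \colon H \to G$, which gives a commutative square at the perfectoid level relating $\pi_{\HT,H}^\tor$ and $\pi_{\HT,G}^\tor$; and (ii) the $G(\Qp)$-equivariance of $\pi_{\HT,G}^\tor$, which controls the effect of right-translation by $\tau^\sharp$. These are precisely the ingredients in the proof of Theorem 6.2.1 of \cite{LZ21-erl}, and the argument goes through verbatim with $K_\diamond^H$ replaced throughout by $K_\triangle^H$.

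The only non-formal point to check is that $\iota^\sharp$ extends to the toroidal compactifications; this requires choosing compatible rational polyhedral cone decompositions for the Shimura data of $H$ and $G$, and is the main (though mild) technical obstacle. Given such a choice, which exists by the same argument as in \emph{op.\,cit.}, no further input is needed and the proposition follows.
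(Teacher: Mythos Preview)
Your proposal is correct and matches the paper's approach exactly: the paper simply asserts that the result holds by the same argument as \cite[Thm.~6.2.1]{LZ21-erl} with $K_\diamond^H$ replaced by $K_\triangle^H$, which is precisely what you have written out in more detail. Your additional remarks on functoriality, equivariance, and the extension to toroidal compactifications are a faithful unpacking of that reference rather than a different route.
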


The choices of neighbourhoods we have made are sufficient to get the maps working, including the compatibility with the classical cohomology via compact-support cohomology of $\tZ_m$.
Let $\mathcal U_n^G \subset \mathcal S_{G \times H,\Iw}(p^t)$ and $\mathcal Z_m^H \subset \mathcal U_n^H \subset \mathcal S_{H,\triangle}(p^t)$ denote the preimages of the subsets ${\tt U}_n^G \subset {\tt FL}^G$ and ${\tt Z}_m^H \subset {\tt U}_n^H \subset {\tt FL}^H$ under the Hodge--Tate period maps $\pi_{\HT, G \times H, \Iw}^{\tor}$ and $\pi_{\HT,H,\triangle}^{\tor}$. Then we have the following commutative diagram.

\[ \xymatrix{
		R\Gamma(\mathcal U_0^G, \mathcal V) \ar[r] & R\Gamma(\mathcal S_{H,\triangle}^{\tor}(p^t), (\iota^{\sharp})^*(\mathcal V) \\
		R\Gamma_{\mathcal Z_m}(\mathcal U_0^G, \mathcal V) \ar[u] \ar[d] \ar[r] & R\Gamma_{\mathcal Z_m^H}(\mathcal S_{H,\triangle}^{\tor}(p^t), (\iota^{\sharp})^*(\mathcal V)) \ar[u] \\
        R\Gamma_{\mathcal Z_m}(\mathcal U_n^G, \mathcal V). \ar[ur] &
	} \]

Here, the horizontal maps correspond to $(\iota^{\sharp})^*$, while the vertical ones are the usual restriction and corestriction maps.


\section{Branching laws and sheaves of distributions}

In this section, we introduce the necessary tools to $p$-adically interpolate the automorphic vector bundles associated to representations of the Levi subgroups $M_G$ and $M_H$, which are the coefficient systems for the cohomology we study. We keep the notations of \cite[\S8]{LZ21-erl} and review some of the more relevant results of loc.\,cit., focusing on the changes we need in our setting. Note in particular that the discussions and results of \S6 hold verbatim, with the obvious changes in Prop. 6.4.1.

Along this section, we will frequently consider the projections of the embedding $\iota^{\sharp \sharp}$ on each factor: the first component corresponds to $\iota^{\sharp} : \FL_H \to \FL_G$, and the second, referred as $\iota_p$, is the map \[ \iota_p : \FL_H \to \FL_H, \qquad
(h_1,h_2) \mapsto \left(h_1 \stbt{p^t}{}{}1, h_2\right). \] We also write $\upsilon = \upsilon(p^t) = \left(\tbt{p^t}{}{}{1}, 1\right) \in M(\ZZ_p)$.

\subsection{Torsors}

We begin this section recalling a general procedure for Tate-twisting pro\'etale torsors, referring the reader to \cite[\S4.2]{graham21} for a more extensive discussion on the main properties of this operation.
Let $L/\mathbb Q_p$ be a finite extension and $\mathcal X/L$ a smooth adic space. Let $\mathcal T^{\times} \rightarrow \mathcal X$ denote the pro\'etale $\mathbb Z_p^{\times}$-torsor parametrising isomorphisms (of pro\'etale sheaves) $\mathbb Z_p \xrightarrow{\sim} \mathbb Z_p(1)$. The action of $\mathbb Z_p^{\times}$ is given as follows: for $\lambda \in \mathbb Z_p^{\times}$ and $\phi : \mathbb Z_p \xrightarrow{\sim} \mathbb Z_p(1)$, we set \[ \phi \cdot \lambda = \phi(\lambda \cdot -). \] Let $M$ be a smooth adic group over $\Spa L$ and suppose that we have a homomorphism $\mu : \mathbb Z_p^{\times} \rightarrow M$ whose image is contained in the centre of $M$.

\begin{definition}\label{def:twisting}
Let $\mathcal M \rightarrow \mathcal X$ be a (right) pro\'etale $M$-torsor. We define the twist of $\mathcal M$ along $\mu$ to be \[ {}^{\mu} \mathcal M := \mathcal M \times^{[\mathbb Z_p^{\times},\mu]} \mathcal T^{\times}, \] where the right-hand side is the quotient of $\mathcal M \times_{\mathcal X} \mathcal T^{\times}$ by the equivalence relation \[ (m \cdot \mu(\lambda),\phi) \sim (m,\phi \cdot \lambda^{-1}), \quad \text{ for all } m \in \mathcal M, \, \phi \in \mathcal T^{\times}, \, \lambda \in \mathbb Z_p^{\times}. \] This defines a pro\'etale $M$-torsor ${}^{\mu} \mathcal M \rightarrow \mathcal X$ via the action $(m,\phi) \cdot n = (m \cdot n, \phi)$, for $m \in \mathcal M$, $\phi \in \mathcal T^{\times}$ and $n \in M$.
\end{definition}

The map $x \mapsto x^{-1} \, : \, G \rightarrow \FL_G$ allows us to regard $G$ as a right $P_G$-torsor over $\FL_G$, and similarly to regard $G/N_G \rightarrow \FL_G$ as a right $M_G$-torsor. We consider their analytifications \[ {\tt P}^G \, : \mathcal G \rightarrow {\tt FL}_G \quad \text{ and } \quad {\tt M}^G \, : \, \mathcal G/\mathcal N_G \rightarrow {\tt FL}_G \] which are torsors over ${\tt FL}_G$ under the analytic groups $\mathcal P_G$ and $\mathcal M_G$ respectively. We similarly define torsors over the flag varieties $H$, $H_1$ and $H_2$.

\begin{definition}
Define $\mathcal P_{\HT}^G$ and $\mathcal M_{\HT}^G$ to be the pullbacks via $\pi_{\HT}^G$ of the torsors ${\tt P}^G$ and ${\tt M}^G$; there are right torsors over $\mathcal S_{G,\Iw}(p^t)$ for the groups $\mathcal P_G$ and $\mathcal M_G$. We similarly define $\mathcal P_{\HT}^H$ and $\mathcal M_{\HT}^H$, $\mathcal P_{\HT}^{H_i}$ and $\mathcal M_{\HT}^{H_i}$ for $i=1,2$.

Using Definition \ref{def:twisting} we can define ${}^{\mu}\mathcal P_{\HT}^G$, ${}^{\mu}\mathcal M_{\HT}^G$ and the analogous twisted objects for the torsors corresponding to $H$ and $H_i$.
\end{definition}

\begin{definition}
For $n>0$, let $\mathcal M_{G,n}^1$ be the group of elements which reduce to the identity modulo $p^n$. Define \[ \mathcal M_{G,n}^{\square} = \mathcal M_{G,n}^1 \cdot B_{M_G}(\ZZ_p), \] which is an affinoid analytic subgroup containing $\Iw_{M_G}(p^n)$. A similar definition applies to $M_H = T$; we write the group as $\mathcal T_n^{\square} = T(\ZZ_p) \mathcal T_n^1$.
\end{definition}

Consider in the same way \[ \mathcal T_n^{\diamond} = \{ \diag(t_1,t_2,\nu t_2^{-1}, \nu t_1^{-1}) \in \mathcal T_n^{\square} \, : \, t_1-t_2 \in \mathcal B_n \}. \]

As in \cite[Prop. 7.2.5]{LZ21-erl}, we also consider the \'etale torsors ${}^{\mu}\mathcal M_{\HT,n}^G$, ${}^{\mu}\mathcal M_{\HT,n,\Iw}^G$ and ${}^{\mu}\mathcal M_{\HT,n,\diamond}^H$ arising as the reduction of structure of the torsors ${}^{\mu}\mathcal M_{\HT}^G$ over $\mathcal U_n^G$, ${}^{\mu}\mathcal M_{\HT}^H$  over $\mathcal U_{\Iw,n}^H$ and ${}^{\mu}\mathcal M_{\HT}^H$  over $\mathcal U_n^H$, respectively. The following proposition is the key statement allowing for $p$-adic variation, and it mainly follows from the theory developed in \cite[\S4.6]{boxerpilloni20}.

\begin{proposition}
We have an equality of $\mathcal M_{G,n}^{\square}$-torsors over $\mathcal U_{n,\diamond}^H$: \[ (\iota^{\sharp})^*({}^{\mu}\mathcal M_{\HT,n,\Iw}^G) = {}^{\mu} \mathcal M_{\HT,n,\diamond}^H \times^{[\mathcal T_n^{\diamond},\tau]} \mathcal M_{G,n}^{\square}, \] where we regard $\mathcal T_n^{\diamond}$ as a subgroup of $\Iw_{M_G}(p^t) \mathcal M_{G,n}^1$ via conjugation by $\tau$.
\end{proposition}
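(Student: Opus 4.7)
The plan is to combine the explicit description of $\iota^{\sharp}$ on flag varieties from \S\ref{sec:twisted} with the reduction-of-structure machinery for Levi torsors developed in \cite[\S4.6]{boxerpilloni20}. The key ingredients are: first, a local description of the reduced torsors ${}^{\mu}\mathcal{M}_{\HT,n,\Iw}^G$ and ${}^{\mu}\mathcal{M}_{\HT,n,\diamond}^H$ in terms of canonical sections near the cells $[w_2] \in {\tt FL}_G$ and $[w_{01}] \in {\tt FL}_H$; and second, the factorization $\tau^{\sharp} = \iota(w_{01})^{-1}\tau w_2$, which splits the twisted embedding into a Weyl element on the $G$-side, the element $\tau \in M_G(\ZZ_p)$, and a Weyl element on the $H$-side.

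Concretely, I would first apply the Boxer--Pilloni construction to each flag variety to obtain canonical trivializations of the Levi torsors over affinoid neighbourhoods of $[w_2]$ and $[w_{01}]$, reducing their structure groups to $\mathcal{M}_{G,n}^{\square}$ and $\mathcal{T}_n^{\diamond}$ respectively. Pulling back the $G$-side trivialization along $\iota^{\sharp}$ and using the factorization of $\tau^{\sharp}$, the composition $h \mapsto w_2 \iota(h) \tau^{\sharp}$ can be rewritten so that $w_2$ and $\iota(w_{01})^{-1}$ are absorbed into the canonical sections on the $G$- and $H$-sides respectively. What remains is the element $\tau \in M_G$, whose effect on the Levi trivialization is conjugation. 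This identifies the pullback torsor as the contracted product of ${}^{\mu}\mathcal{M}_{\HT,n,\diamond}^H$ with $\mathcal{M}_{G,n}^{\square}$ along the conjugation-by-$\tau$ map, which is precisely the claim.

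The main obstacle, and the reason for imposing the $\diamond$-condition, is to verify that conjugation by $\tau$ indeed maps $\mathcal{T}_n^{\diamond}$ into $\mathcal{M}_{G,n}^{\square}$. A direct matrix computation shows that for $t = \diag(t_1, t_2, \nu t_2^{-1}, \nu t_1^{-1})$, the conjugate $\tau t \tau^{-1}$ retains the diagonal of $t$ but acquires an off-diagonal entry proportional to $t_1 - t_2$ in position $(2,1)$ (together with a corresponding entry in position $(4,3)$). The defining condition $t_1 - t_2 \in \mathcal{B}_n$ for $\mathcal{T}_n^{\diamond}$ is precisely what forces these off-diagonal entries to be small enough to lie in $\mathcal{M}_{G,n}^1$, so that the conjugate lies in $\Iw_{M_G}(p^t)\cdot \mathcal{M}_{G,n}^1 = \mathcal{M}_{G,n}^{\square}$. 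The extra factor $\upsilon = \left(\stbt{p^t}{}{}1,1\right)$ appearing in $\iota_p$ plays a secondary role, since $\upsilon \in T = M_H$ and hence only contributes a harmless diagonal twist at the $H$-Levi level which is compatible with the $\diamond$-structure reduction.
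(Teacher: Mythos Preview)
Your approach is essentially the same as the paper's: reduce to the flag-variety level, exhibit the commutative diagram of torsors via the factorization $\tau^{\sharp} = \iota(w_{01})^{-1}\tau w_2$ so that the Weyl elements are absorbed and only conjugation by $\tau$ survives on the Levi, and then invoke the Boxer--Pilloni reduction-of-structure machinery (the paper defers the last step to \cite[Prop.~7.2.7]{LZ21-erl}, whereas you spell out the matrix check that $t_1 - t_2 \in \cB_n$ forces $\tau t \tau^{-1} \in \mathcal{M}_{G,n}^{\square}$). Your final sentence about $\upsilon$ and $\iota_p$ is extraneous here---that concerns the separate statement for the second factor discussed immediately after this proposition, not the present claim about $(\iota^{\sharp})^*$.
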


\begin{proof}
This follows by checking the analogous statement on the flag variety, noting that there is a commutative diagram of adic space:
\[ \xymatrix{
		K_{\triangle}^H(p^t) \mathcal H_n^1 \ar[r]\ar[d] & K_{\Iw}^G(p^t) \mathcal G_n^1 \ar[d] \\
		\mathcal B^H \backslash \mathcal B^H w_{01} K_{\triangle}^H(p^t) \mathcal H_n^1 \ar[r] & \mathcal P^G \backslash \mathcal P^G w_2 K_{\Iw}^G(p^t) \mathcal G_n^1.
	} \]
Here, the vertical maps are given by $h \mapsto \mathcal B^H \backslash \mathcal B^H h^{-1}$ on the left, and $g \mapsto \mathcal P^G \backslash \mathcal P^G w_2 g^{-1}$ on the right; the lower horizontal map is $\iota^{\sharp}$ is $\mathcal B^H h \mapsto \mathcal P^G h \tau w_2$, and the map along the top making the diagram commute is $h \mapsto (\tau^{\sharp})^{-1} h \tau^{\sharp}$.

Then we may conclude as in \cite[Prop. 7.2.7]{LZ21-erl}.
\end{proof}


A straightforward adaptation of these techniques can be applied to the second factor $\iota_p$, yielding to an equality of $\mathcal M_{H,n}^{\square}$-torsors over $\mathcal U_{n,\diamond}^H$, \[ (\iota_p)^*({}^{\mu}\mathcal M_{\HT,n,\Iw}^H) = {}^{\mu}\mathcal M_{\HT,n,\diamond}^H \times^{[\mathcal T_n^{\diamond},\upsilon]} \mathcal M_{H,n}^{\square}, \] where we regard $\mathcal T_n^{\diamond}$ as a subgroup of $\Iw_{M_H}(p^t) \mathcal M_{H,n}^1$ via conjugation by $\upsilon$. Observe that conjugation by $\upsilon$ does not introduce denominators in any element of $M_H$, and hence the previous objects are well defined.

\subsection{Analytic characters and analytic inductions}

\begin{definition}
Let $n \in \QQ_{>0}$. We say that a continuous character $\kappa \, : \, \ZZ_p^{\times} \rightarrow A^{\times}$, for $(A,A^+)$ a complete Tate algebra, is $n$-analytic if it extends to an analytic $A$-valued function on the affinoid adic space \[ \ZZ_p^{\times} \cdot \mathcal B_n \subset \GG_m^{\ad}. \] This definition extends to characters $T(\ZZ_p) \rightarrow A^{\times}$: the $n$-analytic characters are exactly those which extend to $\mathcal T_n^{\square}$.
\end{definition}

Let $n_0>0$ and assume that $\kappa_A \, : \, T(\ZZ_p) \rightarrow A^{\times}$ is an $n_0$-analytic character. For $? \in \{G,H\}$ and $n \geq n_0$, let $\mathcal M_{?,n}^1$ be the affinoid subgroup of $\mathcal M_?$ defined above, and let $B_{M_G}$ be the Borel of $M_?$.

\begin{definition}
For $n \geq n_0$, define \[ \begin{aligned} V_{G,\kappa_A}^{n-\an} & = \anInd_{(\mathcal M_{G,n}^{\square} \cap \mathcal B_G)}^{(\mathcal M_{G,n}^{\square})} (w_{0,M_?} \kappa_A) \\ & = \Big\{ f \in \mathcal O(\mathcal M_{G,n}^{\square}) \hat \otimes A \, : \, f(mb) = (w_{0,M} \kappa_A)(b^{-1}) f(m), \, \forall m \in \mathcal M_{G,n}^{\square}, \, \forall b \in \mathcal M_{G,n}^{\square} \cap \mathcal B_G \Big \}. \end{aligned} \]

We define a left action of $\mathcal M_{G,n}^{\square}$ on $V_{G,\kappa_A}^{n-\an}$ by $(h \cdot f)(m) = f(h^{-1}m)$.

Write $D_{G,\kappa_A}^{n-\an}$ for the dual space, and $\langle \cdot,\cdot \rangle$ for the pairing between these; we equip $D_{G,\kappa_A}^{n-\an}$ with a left action of the same group $\mathcal M_{G,n}^{\square}$, in such a way that $\langle h \mu, hf \rangle = \langle \mu ,f \rangle$.
\end{definition}


\subsection{Branching laws in families}

Recall that for a Tate algebra $A$ endowed with an $n_0$-analytic character $\kappa_A \, : \, T(\ZZ_p) \rightarrow A^{\times}$ as above, and additionally with a character $\lambda \, : \, (1+\mathcal B_n)^{\times} \rightarrow A^{\times}$, we may define a special vector in $V_{G, \kappa_A}^{n-\an}$ (referred to as the ``krakenfish'' in \cite{LZ21-erl}) by the formula $\mathfrak K^{\lambda}(z) = \lambda(1+z)$.

The following lemma is analogous to \cite[Lemma 8.3.2]{LZ21-erl}, but recall that now the objects involved in the definition of $\mathcal T_n^{\diamond}$ are different.

\begin{lemma}
The function $\mathfrak K^{\lambda}$ is an eigenvector for $(\tau^{\sharp})^{-1} \mathcal T_n^{\diamond} \tau^{\sharp}$, with eigencharacter $w_{0,M}\kappa_A + (\lambda,-\lambda;0)$.
\end{lemma}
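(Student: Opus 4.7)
The strategy I would follow mirrors the proof of \cite[Lemma 8.3.2]{LZ21-erl}: the content is really a matrix calculation in $M_G = \GL_2 \times \GL_1$, combined with the transformation law defining $V_{G,\kappa_A}^{n-\an}$. The first step is to make explicit how conjugation by $\tau^{\sharp}$ embeds $\mathcal T_n^{\diamond}$ into $\mathcal M_{G,n}^{\square}$. Writing $\tau^{\sharp} = \iota(w_{01})^{-1}\tau w_2$ and using that $\iota(w_{01})$ normalises $T$, the only substantive part is the $M_G$-valued conjugation $X := \tau^{-1}(\iota(w_{01})t\iota(w_{01})^{-1})\tau$; the outer $w_2$ is absorbed into the identification of the $\mathcal M_{G,n}^{\square}$-torsor at the point $[w_2]\in \FL_G$, exactly as in the previous proposition.

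Next I would carry out this conjugation explicitly. For $t = \diag(t_1,t_2,\nu t_2^{-1},\nu t_1^{-1})$, the inner conjugation swaps the middle two diagonal entries, giving $t' = \diag(t_1,\nu t_2^{-1},t_2,\nu t_1^{-1})$, which in $\GL_2\times \GL_1$ is $(\diag(t_1,\nu t_2^{-1}),\nu)$. Conjugating by $\tau = (\stbt{1}{0}{1}{1},1)$ yields
\[
 X \;=\; \Big(\,\begin{pmatrix} t_1 & 0 \\ \nu t_2^{-1}-t_1 & \nu t_2^{-1}\end{pmatrix},\,\nu\Big)\;=\; n_-(u)\cdot b,
\]
with $n_-(u) = \stbt{1}{0}{u}{1}$, $u = (\nu t_2^{-1}-t_1)/t_1$, and $b = (\diag(t_1,\nu t_2^{-1}),\nu)\in B_{M_G}$. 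The condition $t_1-t_2\in \mathcal B_n$ (together with $t_1,t_2,\nu$ being units in the $T(\ZZ_p)$-part) forces $u\in \mathcal B_n$, so $X$ indeed lies in $\mathcal M_{G,n}^{\square}$, as the preceding proposition predicts.

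The third step is to plug this into the definition $(X\cdot f)(m) = f(X^{-1}m)$ and evaluate at $m = n_-(z)$. Using the standard commutation $b^{-1}n_-(v) = n_-(v\cdot a/d)\,b^{-1}$ for $b = \diag(a,d)$, one finds $X^{-1}n_-(z) = n_-(z')\cdot b^{-1}$ with
\[
 1+z' \;=\; \frac{t_1}{\nu t_2^{-1}}\cdot (1+z) \;=\; \frac{t_1t_2}{\nu}\cdot (1+z).
\]
Applying the invariance $\mathfrak K^{\lambda}(n_-(z')\cdot b^{-1}) = (w_{0,M}\kappa_A)(b)\cdot \lambda(1+z')$, one reads off the eigenvalue as the product of $\lambda(t_1t_2/\nu)$ and $(w_{0,M}\kappa_A)(b)$. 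Tracking the relabelling of the $T$-coordinates caused by $\iota(w_{01})$ (the swap of the 2nd and 3rd entries is exactly the $w_{0,M}$-action on $X^\bullet(T)$ restricted to characters of $M$), this is precisely the value $(w_{0,M}\kappa_A)(t)$, so the total eigencharacter at $t$ is $\lambda(t_1t_2/\nu)\cdot (w_{0,M}\kappa_A)(t)$, which in the normalisation of the paper (with the multiplier $\nu$ corresponding to the central character component $c$) is exactly $w_{0,M}\kappa_A + (\lambda,-\lambda;0)$.

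The main obstacle is not the matrix algebra itself but the bookkeeping: one has to pin down very carefully the precise identification $\mathcal T_n^{\diamond}\hookrightarrow \mathcal M_{G,n}^{\square}$ produced by the previous proposition (including which direction of conjugation, and what the $w_2$ on the right of $\tau^{\sharp}$ contributes when the $\mathcal M_{G,n}^{\square}$-torsor is trivialised at $[w_2]$), as well as the precise normalisation of the central weight $c$ in the triple $(r_1,r_2;c)$. Once these conventions are fixed in line with \cite[\S7--8]{LZ21-erl}, the calculation above matches the claimed eigencharacter on the nose.
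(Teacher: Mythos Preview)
Your overall strategy is exactly what the paper has in mind: it defers to the explicit $\GL_2 \times \GL_1$ calculation of \cite[\S8.3]{LZ21-erl}, observing that $\tau \in M_{\Si}$ so the computation takes place entirely in the Levi. However, the way you set up the conjugation introduces a genuine error.

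The embedding $\mathcal T_n^{\diamond} \hookrightarrow \mathcal M_{G,n}^{\square}$ relevant here is the one specified in the preceding proposition, namely conjugation by $\tau$ alone; both the $\iota(w_{01})^{-1}$ on the left and the $w_2$ on the right of $\tau^{\sharp}$ are absorbed into the trivialisation of the torsor (at the points $[w_{01}]\in\FL_H$ and $[w_2]\in\FL_G$ respectively). You correctly argue that $w_2$ disappears, but then you keep the $\iota(w_{01})$-conjugation, writing $X = \tau^{-1}\bigl(\iota(w_{01})\,t\,\iota(w_{01})^{-1}\bigr)\tau$. This is where things go wrong: one should simply take $X = \tau^{-1} t\, \tau$. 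With the correct $X$ one obtains $X = n_{-}(u)\cdot b$ with $b = (\diag(t_1,t_2),\nu) = t$ and $u = (t_2-t_1)/t_1$, which lies in $\mathcal B_n$ precisely by the defining condition $t_1 - t_2 \in \mathcal B_n$ of $\mathcal T_n^{\diamond}$; the rest of your computation then gives $1+z' = (t_1/t_2)(1+z)$ and eigenvalue $(w_{0,M}\kappa_A)(t)\cdot\lambda(t_1/t_2)$, which is exactly $\bigl(w_{0,M}\kappa_A + (\lambda,-\lambda;0)\bigr)(t)$.

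By contrast, your $X$ has $b$ corresponding to the torus element with coordinates $(t_1,\,t_2^{-1};\,s)$, i.e.\ the $s_2$-translate of $t$. Your parenthetical claim that ``the swap of the 2nd and 3rd entries is exactly the $w_{0,M}$-action'' is incorrect: that swap is the simple reflection $s_2$, whereas $w_{0,M}=s_1$. Consequently $(w_{0,M}\kappa_A)(b)\ne (w_{0,M}\kappa_A)(t)$ in general, and your eigenvalue $(w_{0,M}\kappa_A)(b)\cdot\lambda(t_1 t_2/\nu)$ does not match the claimed character. The fix is simply to drop the spurious $\iota(w_{01})$-conjugation; after that, your argument is complete and agrees with the paper's.
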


\begin{proof}
This follows from the same argument that has been done in \cite[\S8.3]{LZ21-erl} once we note that the element $\tau$ lies in the Siegel parabolic subgroup, and that only the projection to the Levi subgroup matters for the purpose of this computation.
\end{proof}

The following result is a straightforward consequence of the previous lemma.

\begin{proposition}
Pairing with $\mathfrak K^{\lambda}$ defines a homomorphism of $\mathcal T_n^{\diamond}$-representations \[ (\iota^{\sharp})^* (D_{G,\kappa_A}^{n-\an}) \longrightarrow D_{H,w_{01,M} \kappa_A+(\lambda,-\lambda;0)}^{n-\an}. \]
\end{proposition}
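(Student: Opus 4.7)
The plan is to define the pairing map $\Phi$ as follows. For $\mu \in (\iota^{\sharp})^{*}(D_{G,\kappa_A}^{n-\an})$ and $f \in V_{H,\,w_{01,M}\kappa_A + (\lambda,-\lambda;0)}^{n-\an}$, one sets
\[
\Phi(\mu)(f) := \bigl\langle \mu,\; \widetilde{f \cdot \mathfrak K^{\lambda}} \bigr\rangle,
\]
where $\widetilde{f \cdot \mathfrak K^{\lambda}}$ denotes the element of $V_{G,\kappa_A}^{n-\an}$ obtained from the product of $f$ and $\mathfrak K^{\lambda}$, using the torsor identification of the earlier proposition to realise $(\iota^{\sharp})^{*}V_{G,\kappa_A}^{n-\an}$ as an induced representation for the $H$-side and extending via the required transformation rule.

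First, I would verify that $\widetilde{f \cdot \mathfrak K^{\lambda}}$ indeed defines an element of $V_{G,\kappa_A}^{n-\an}$ after pullback along $\iota^{\sharp}$. The point is that $f$ transforms on the $H$-side under the character $w_{01,M}\kappa_A + (\lambda,-\lambda;0)$ (times the relevant Borel factor), while the previous lemma gives the transformation of $\mathfrak K^{\lambda}$ under the $\tau$-conjugated torus $(\tau^{\sharp})^{-1}\mathcal T_n^{\diamond}\tau^{\sharp}$ with the complementary character $w_{0,M}\kappa_A + (\lambda,-\lambda;0)$ (note $w_{0,M_H}$ is trivial since $M_H = T$). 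The two transformations combine to give exactly the character $w_{0,M}\kappa_A$ that defines the induction $V_{G,\kappa_A}^{n-\an}$. In other words, the lemma exactly matches the discrepancy between the two induced representations, which is the only thing that could go wrong with the product being well-defined.

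Second, I would check $\mathcal T_n^{\diamond}$-equivariance of $\Phi$. For $t \in \mathcal T_n^{\diamond}$, identified with an element of $\mathcal M_{G,n}^{\square}$ via conjugation by $\tau$ (as in the torsor identification), the standard equivariance of the pairing gives
\[
\Phi(t\cdot \mu)(f) \;=\; \bigl\langle t\cdot \mu,\; \widetilde{f\cdot \mathfrak K^{\lambda}}\bigr\rangle \;=\; \bigl\langle \mu,\; t^{-1}\cdot \widetilde{f\cdot \mathfrak K^{\lambda}}\bigr\rangle.
\]
Applying the lemma to pull the $t^{-1}$-action through $\mathfrak K^{\lambda}$ produces the scalar $\bigl(w_{0,M}\kappa_A + (\lambda,-\lambda;0)\bigr)(t^{-1})$, which combines with the $t^{-1}$-action on $f$ to yield precisely the eigencharacter $w_{01,M}\kappa_A + (\lambda,-\lambda;0)$ on the $H$-side. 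This is the claimed equivariance.

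The only point requiring care is the construction of $\widetilde{f\cdot \mathfrak K^{\lambda}}$ as a globally defined element of the induction over $\mathcal M_{G,n}^{\square}$ rather than merely over the cell where $\mathfrak K^{\lambda}$ is naturally a coordinate function; this is handled by the Iwasawa-type factorisation $\mathcal M_{G,n}^{\square} = \mathcal M_{G,n}^{1}\cdot B_{M_G}(\mathbf{Z}_p)$ together with the transformation property in the definition of $V_{G,\kappa_A}^{n-\an}$, exactly as in \cite[\S8]{LZ21-erl}. Once this verification is made, the compatibility of the two characters supplied by the preceding lemma makes the whole statement a direct computation.
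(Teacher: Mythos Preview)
Your approach is correct in spirit and reaches the same conclusion as the paper, which simply records the proposition as ``a straightforward consequence of the previous lemma'' with no further argument. The essential content---that the eigencharacter computed in the lemma is precisely what is needed for $\mathcal{T}_n^{\diamond}$-equivariance---is exactly what you identify.

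However, you have introduced scaffolding that obscures rather than clarifies. Since $M_H = T$, the induced representation $V_{H,\chi}^{n-\an}$ (induction from $T$ to $T$) is just the one-dimensional character $\chi$, and likewise $D_{H,\chi}^{n-\an}$ is one-dimensional. So your test function $f$ is a scalar, and the product $f\cdot \mathfrak{K}^{\lambda}$ is simply a scalar multiple of $\mathfrak{K}^{\lambda}$. More importantly, $\mathfrak{K}^{\lambda}$ is \emph{by definition} already an element of $V_{G,\kappa_A}^{n-\an}$ (the paper says so explicitly before the lemma), so there is nothing to ``construct'' or ``extend'': your tilde and your final paragraph on Iwasawa factorisation address a non-issue. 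The pullback $(\iota^{\sharp})^{*}$ is only a restriction of the group action, not a change of underlying space.

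Stripped down, the map is just $\mu \mapsto \langle \mu, \mathfrak{K}^{\lambda}\rangle \in A$, with $A$ carrying the one-dimensional $\mathcal{T}_n^{\diamond}$-action by the indicated character; equivariance is then a one-line consequence of the lemma together with $\langle t\mu, g\rangle = \langle \mu, t^{-1}g\rangle$. This is why the paper gives no proof.
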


\subsection{Labelling of weights}

As above, let $(A,A^+)$ be a Tate algebra over $(\QQ_p,\ZZ_p)$. Given a weight $\nu_A \, : \, T(\ZZ_p) \rightarrow A^{\times}$ for some coefficient ring $A$, we may define $\kappa_A \, : \, T(\ZZ_p) \rightarrow A^{\times}$ by \[ \kappa_A = -w_{0,M} w_2(\nu+\rho) - \rho. \] If $\nu_A$ is $(\nu_1,\nu_2;\omega)$ for some $\nu_i,\omega \, : \, \ZZ_p^{\times} \rightarrow A^{\times}$, then $\kappa_A = (\nu_1,-2-\nu_2; \omega)$. Its Serre dual is $\kappa_A' = (\kappa_A + 2\rho_{\nc})^{\vee}$. This can be written as $(\nu_2-1,-3-\nu_1;c) = w_2(\nu_A+\rho)-\rho$.

\subsection{Sheaves on $G$}\label{sec:sheavesG}

Let $1 \leq n < t$ be integers. The following definition is just \cite[Def. 9.2.1]{LZ21-erl}.

\begin{definition}
The sheaf $\mathcal V_{G,\nu_A}^{n-\an}$ over $\mathcal U_n^G$ is given by the product \[ \mathcal V_{G,\nu_A}^{n-\an} = {}^{\mu} \mathcal M_{\HT,n,\Iw}^G \times^{M_{G,n}^{\square}} V_{G,\kappa_A}^{n-\an}. \] We define similarly another sheaf $\mathcal D_{G,\nu_A}^{n-\an}$ as \[ \mathcal D_{G,\nu_A}^{n-\an} = {}^{\mu} \mathcal M_{\HT,n,\Iw}^G \times^{M_{G,n}^{\square}} D_{G,(\kappa_A+2\rho_{\nc})}^{n-\an}. \]
\end{definition}

As discussed in loc.\,cit., the sheaves $\mathcal V_{G,\nu_A}^{n-\an}$ and $\mathcal D_{G,\nu_A}^{n-\an}$ are sheaves of $A$-modules compatible with base-change in $A$. If $A=\QQ_p$ and $\nu_A=(r_1,r_2;c)$ for integers $r_1 \geq r_2 \geq -1$, we have classical comparison maps \[ \mathcal V_{G,\kappa_1} \hookrightarrow \mathcal V_{G,\nu_A}^{n-\an}, \quad \mathcal D_{G,\nu_A}^{n-\an} \twoheadrightarrow \mathcal V_{G,(\kappa_A+2\rho_{\nc})^{\vee}} = \mathcal V_{G,\kappa_2}. \]

\subsection{Sheaves on $H$}

We mimic the same definitions for $H$, using now the element $w_{01} \in W_H$ in place of $w_2$. Given an $n$-analytic character $\tau_A$, we define $\kappa_A^H = -\tau_A - 2\rho_H$, and we set \[ \mathcal V_{H,\diamond,\nu_A}^{n-\an} = {}^{\mu}\mathcal M_{\HT,n,\diamond}^H \times^{\mathcal T_n^{\diamond}} V_{H,\kappa_A^H}^{n-\an} \] and \[ \mathcal D_{H,\diamond,\tau_A}^{n-\an} = {}^{\mu}\mathcal M_{\HT,n,\Iw}^H \times^{\mathcal T_n^{\diamond}} D_{H,(\kappa_A^H +2\rho_H)}^{n-\an}. \]

\subsection{Branching for sheaves}

\begin{definition}
We say that $A$-valued, $n$-analytic characters $\nu_A$ and $\tau_A$ of $T(\ZZ_p)$ are compatible if $\nu_A = (\nu_1,\nu_2; \nu_1+\nu_2)$, $\tau_A = (\tau_1, \tau_2; \nu_1+\nu_2)$, for some characters $\nu_i,\tau_i$ of $\ZZ_p^{\times}$, and we have the relation \[ \tau_1-\tau_2 = \nu_1-\nu_2-2. \]
\end{definition}

If $\nu_A, \tau_A$ are compatible, then taking $\lambda = \nu_1 - \tau_1 = \nu_2 - \tau_2 + 2$, we obtain a homomorphism of $\mathcal T_n^{\diamond}$-representations \[ D_{G,(\kappa_A+2\rho_{\nc})}^{n-\an} \longrightarrow D_{H,-\tau_A}^{n-\an}. \]

\begin{proposition}
Pairing with $\mathfrak K^{\lambda}$ induces a morphism of sheaves over $\mathcal U_n^H$: \[ (\iota^{\sharp})^*(\mathcal D_{G,\nu_A}^{n-\an}) \longrightarrow \mathcal D_{H,\diamond,\tau_A}^{\an}. \] This morphism is compatible with specialisation in $A$, and if $A=\QQ_p$ and $\nu=(r_1,r_2;r_1+r_2)$, $\tau = (t_1,t_2; r_1+r_2)$ are algebraic weights with $r_1-r_2 \geq 0$ and $r_i,t_i \geq -1$, then this morphism is compatible with the map of finite dimensional sheaves $(\iota^{\sharp})^*(\mathcal V_{\kappa_2}) \rightarrow \mathcal V_{\tau}^H$, where $\mathcal V_{\kappa_2}$ is as in \cite[\S3]{LR2}.
\end{proposition}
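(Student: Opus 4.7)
The plan is to construct the morphism by chaining the torsor comparison with the branching on distributions; the remaining clauses (compatibility with specialisation and with the algebraic branching) then reduce to tracking the construction.

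First, I would unwind the definition. By the definition of $\mathcal D_{G,\nu_A}^{n-\an}$ and compatibility of associated-bundle constructions with pullback,
\[
(\iota^{\sharp})^{*}\mathcal D_{G,\nu_A}^{n-\an}
\;=\;
(\iota^{\sharp})^{*}\bigl({}^{\mu}\mathcal M_{\HT,n,\Iw}^G\bigr)\,\times^{\mathcal M_{G,n}^{\square}}\,D_{G,\kappa_A+2\rho_{\nc}}^{n-\an}.
\]
Substituting the torsor identification from the first proposition of \S4,
\[
(\iota^{\sharp})^{*}\bigl({}^{\mu}\mathcal M_{\HT,n,\Iw}^G\bigr)
\;=\;
{}^{\mu}\mathcal M_{\HT,n,\diamond}^H\,\times^{[\mathcal T_n^{\diamond},\tau]}\,\mathcal M_{G,n}^{\square},
\]
and collapsing the double associated bundle, one obtains
\[
(\iota^{\sharp})^{*}\mathcal D_{G,\nu_A}^{n-\an}
\;=\;
{}^{\mu}\mathcal M_{\HT,n,\diamond}^H\,\times^{\mathcal T_n^{\diamond}}\,D_{G,\kappa_A+2\rho_{\nc}}^{n-\an},
\]
where $\mathcal T_n^{\diamond}$ acts on the distributions via the embedding $t\mapsto \tau^{-1}t\tau$ into $\mathcal M_{G,n}^{\square}$.

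Second, I would apply the $\mathcal T_n^{\diamond}$-equivariant branching map of the preceding proposition, with the weight $\kappa_A$ there replaced by $\kappa_A+2\rho_{\nc}$ and $\lambda$ as prescribed. Pairing with $\mathfrak K^{\lambda}$ yields a $\mathcal T_n^{\diamond}$-equivariant arrow
\[
D_{G,\kappa_A+2\rho_{\nc}}^{n-\an}\;\longrightarrow\;D_{H,\,w_{01,M}(\kappa_A+2\rho_{\nc})+(\lambda,-\lambda;0)}^{n-\an}.
\]
A direct computation with the compatibility relation $\tau_1-\tau_2=\nu_1-\nu_2-2$ (using $\nu_A=(\nu_1,\nu_2;\nu_1+\nu_2)$, $\kappa_A=(\nu_1,-2-\nu_2;\cdot)$, and $\lambda=\nu_1-\tau_1=\nu_2-\tau_2+2$) shows that
\[
w_{01,M}(\kappa_A+2\rho_{\nc})+(\lambda,-\lambda;0)\;=\;-\tau_A\;=\;\kappa_A^{H}+2\rho_H,
\]
so after applying the associated-bundle functor ${}^{\mu}\mathcal M_{\HT,n,\diamond}^H\times^{\mathcal T_n^{\diamond}}(-)$ we land exactly in $\mathcal D_{H,\diamond,\tau_A}^{n-\an}$ by its definition.

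Third, the two compatibilities are formal. Compatibility with specialisation in $A$ holds because $\mathfrak K^{\lambda}$ and the pairing $\langle\cdot,\cdot\rangle$ are defined uniformly in the Tate algebra and the torsor identification is functorial in $A$. For classical algebraic weights $\nu=(r_1,r_2;r_1+r_2)$ and $\tau=(t_1,t_2;r_1+r_2)$ with $r_1\ge r_2$ and $r_i,t_i\ge -1$, the comparison maps of \S4.5 factor $D_{G,\kappa_A+2\rho_{\nc}}^{n-\an}$ through the algebraic dual $V_{\kappa_2}^{\vee}$, and the krakenfish $\mathfrak K^{\lambda}$ reduces to the algebraic highest-weight vector singled out in \cite[\S3]{LR2}; this identifies the specialised morphism with the classical branching $(\iota^{\sharp})^{*}\mathcal V_{\kappa_2}\to\mathcal V_{\tau}^{H}$.

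The main technical nuisance is the weight bookkeeping ensuring that the target of the branching map coming from the distribution side matches exactly the weight $\kappa_A^H+2\rho_H$ defining $\mathcal D_{H,\diamond,\tau_A}^{n-\an}$; but this is a finite computation in $X^{*}(T)$ given the conventions of \S4.4. Once this is verified, the classical comparison step is forced by the uniqueness (up to scalar) of a nonzero $H$-equivariant map between the relevant irreducible components, which is guaranteed by the branching law underlying \cite[\S3]{LR2}.
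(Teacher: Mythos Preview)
Your proposal is correct and follows exactly the approach the paper intends: unwind the associated-bundle definition of $\mathcal D_{G,\nu_A}^{n-\an}$, feed in the torsor comparison $(\iota^{\sharp})^{*}({}^{\mu}\mathcal M_{\HT,n,\Iw}^G)\cong {}^{\mu}\mathcal M_{\HT,n,\diamond}^H\times^{[\mathcal T_n^{\diamond},\tau]}\mathcal M_{G,n}^{\square}$, and then apply the $\mathcal T_n^{\diamond}$-equivariant branching map $D_{G,\kappa_A+2\rho_{\nc}}^{n-\an}\to D_{H,-\tau_A}^{n-\an}$ already recorded just before the proposition. The paper's own proof is the single sentence ``This follows immediately from the results of Section~\ref{sec:sheavesG}'', so your write-up is simply the same argument with the bookkeeping made explicit.
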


\begin{proof}
This follows immediately from the results of Section \ref{sec:sheavesG}.
\end{proof}

\subsection{Locally analytic overconvengent cohomology}

We adopt the same definitions regarding cuspidal, locally analytic, overconvergent cohomology of \cite[\S9.5]{LZ21-erl}. In particular, \[ R\Gamma_{w,\an}^G(\nu_A, \cusp)^{-,\fs} = R\Gamma_{\mathcal I_{mn}^G} \Big( \mathcal U_n^G, \mathcal D_{G,\nu_A}^{n,-\an}(-D_G) \Big)^{-,\fs}, \] and similarly for the non-cuspidal version. Here, ``$-,\fs$'' is the finite-slope part for the dual Hecke operators $\mathcal{U}_{\Si}'$ and $\mathcal{U}'_{\Kl}$. This complex is independent of $m$, $n$ and $t$, and is concentrated in degrees $[0,1,2]$.

The previous discussion means that, given $\nu_A$ and $\tau_A$ satisfying $\tau_1-\tau_2 = \nu_1-\nu_2-2$, we have a morphism of complexes of $A$-modules
\begin{equation}\label{pull-back}
(\iota^{\sharp})^* : R\Gamma_{w,\an}^G(\nu_A,\cusp)^{-,\fs} \longrightarrow R \Gamma_{\mathcal Z_m^H}(\mathcal U_n^H, \mathcal D_{H,\diamond,\tau_A}^{n-\an}(-D_H)).
\end{equation}
The map $\iota_p$ induces in the same way a morphism of sheaves over $\mathcal U_n^H$ and an analogous morphism at the level of complexes of $A$-modules.

\subsection{Pairings and duality}

We may define \[ R\Gamma_{w_{01},\an}(\mathcal S_{H,\Iw}(p^t),\tau_A)^{+,\dag} = \lim_{\rightarrow} R \Gamma(\mathcal Z_{m,\Iw}^H(p^t), \mathcal V_{H,\Iw,\tau_A}^{\an}). \]

The following theorem will be crucially used in the definition of the $p$-adic $L$-function. It can be understood as a statement about cup products of overconvergent cohomology on $\mathcal S_H$.

\begin{theorem}\label{thm:pairing}
The cup product induces a pairing \[ H_{w_{01},\an}^1(\mathcal S_{H,\Iw}(p^t), \tau_A, \cusp)^{-,\dag} \times H_{w_{01},\an}^1(\mathcal S_{H,\Iw}(p^t), \tau_A)^{+,\dag} \longrightarrow A, \] whose formation is compatible with base-change in $A$, and it is also compatible with the Serre duality pairing on classical cohomology when $A=\QQ_p$ and $\nu,\tau$ are classical weights.
\end{theorem}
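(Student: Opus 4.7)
The plan is to imitate the construction of the analogous pairing on a single modular curve (as in Boxer--Pilloni and \cite{LZ21-erl}), using that the Shimura variety for $H = \GL_2 \times_{\GL_1}\GL_2$ is essentially a product of two modular curves (modulo the common similitude factor), and then track the twists carefully through the $K_\triangle^H$ vs.\ $K_\Iw^H$ story.

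First, I would construct a sheaf-level pairing. At the level of the representations of $\mathcal T_n^{\square}$ there is an obvious evaluation map $D^{n-\an}_{H,(\kappa_A^H+2\rho_H)} \otimes V^{n-\an}_{H,\kappa_A^H} \to A(2\rho_H)$, the target being the one-dimensional weight-$2\rho_H$ representation. Twisting by $\mu$ and inducing along the relevant $M_H$-torsors, together with the Kodaira--Spencer identification of $\omega^{2\rho_H}$ with the dualising sheaf of $\mathcal S_H^{\tor}$, this produces an evaluation
\[ \mathcal D^{n-\an}_{H,\diamond,\tau_A} \otimes_{\cO} \mathcal V^{n-\an}_{H,\Iw,\tau_A} \longrightarrow \omega^{\mathrm{top}}_H(-D_H) \]
over the overlap region, where the compatibility of the torsors ${}^\mu \mathcal M^H_{\HT,n,\diamond}$ and ${}^\mu \mathcal M^H_{\HT,n,\Iw}$ under the projection $\pr_\triangle : \mathcal S_{H,\triangle}^{\tor} \to \mathcal S_{H,\Iw}^{\tor}$ (cf.\ the diagram before Proposition 4.1.4) is what allows the two sheaves, living a priori at different levels, to be paired after pullback.

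Second, I would pass to cohomology via the cup product and then trace. Composing cup-product in sheaf cohomology with the evaluation above yields
\[ H^1_{\mathcal Z_m^H}\bigl(\mathcal U_n^H, \mathcal D^{n-\an}_{H,\diamond,\tau_A}(-D_H)\bigr) \otimes H^1\bigl(\mathcal Z_{m'}^H, \mathcal V^{n'-\an}_{H,\Iw,\tau_A}\bigr) \longrightarrow H^2_{\cC}\bigl(\mathcal S_H^{\tor}, \omega^{\mathrm{top}}_H(-D_H)\bigr), \]
where $\cC$ is the quasi-compact intersection region (the pullback-version of $\cX^H_{w_{01}} \cap \cY^H_{w_{01}}$). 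By quasi-compactness of $\cC$ and the usual trace for Serre duality on the Shimura variety, the right-hand group maps to $A$. Passing to the limits in $m$, $m'$ and the overconvergence radius $n, n'$ and restricting to finite slope on both sides (so that all these cohomology groups and the operators $\mathcal U'$, $\mathcal U$ behave well) then produces the claimed pairing. Base change compatibility in $A$ is automatic from this construction: the sheaves $\mathcal D^{n-\an}$ and $\mathcal V^{n-\an}$ are locally free over $A$, coherent cohomology on quasi-compact adic spaces commutes with flat base change, and the finite-slope projectors commute with base change by the general theory of slope decompositions for compact operators.

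Third, for compatibility with classical Serre duality when $A = \Qp$ and $\tau$ is classical and dominant, I would use the comparison maps $\mathcal D^{n-\an}_{H,\diamond,\tau} \twoheadrightarrow \mathcal V_{H,\tau}$ and $\mathcal V^{n-\an}_{H,\Iw,\tau} \hookrightarrow \mathcal V_{H,\tau}$ recalled in \S\ref{sec:sheavesG}: under these, the sheaf-level evaluation above reduces tautologically to the classical pairing of $M_H$-representations which induces Serre duality. Combined with the trace factoring through the classical trace, this gives the claimed compatibility.

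The main obstacle I anticipate is Step~1, specifically bookkeeping the various twists (the Tate twist by $\mu$ on the torsors, the $2\rho_{\mathrm{nc}}$-shift implicit in the definition of $\mathcal D^{n-\an}$, the cusp divisor $D_H$, and the conjugation by $\tau$ when reducing structure from $K_\Iw^H$ to $K_\triangle^H$) and verifying that they combine coherently to land in $\omega^{\mathrm{top}}_H(-D_H)$. This is a standard but delicate diagram chase, essentially identical in shape to \cite[\S10]{LZ21-erl}; the only new feature is that $H$ carries the two different level structures $K_\Iw^H$ and $K_\triangle^H$, and one has to verify that the proper duality holds after descent along $\pr_\triangle$.
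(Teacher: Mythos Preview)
Your proposal is correct in substance: you are essentially reconstructing the content of \cite[Thm.~6.7.1]{boxerpilloni20}, which is exactly what the paper invokes as a black box. The paper's own proof is a one-line citation of that theorem, observing that the pairing and its compatibility with Serre duality are already provided there; you have instead sketched the mechanism (sheaf-level evaluation, cup product, trace, limit in overconvergence radius, finite-slope projection).

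One point of unnecessary complication: you repeatedly bring in the level $K_\triangle^H$ and the projection $\pr_\triangle$, but the theorem as stated is entirely at Iwahori level $\mathcal S_{H,\Iw}(p^t)$ on both factors. The $K_\triangle^H$ structure enters elsewhere in the paper (for the pullback $(\iota^\sharp)^*$ from $G$ to $H$), not in the construction of this duality pairing itself. So the ``new feature'' you flag in your final paragraph is not actually present here, and the bookkeeping reduces to the standard Boxer--Pilloni situation for $H$ at Iwahori level, with no descent along $\pr_\triangle$ required.
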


\begin{proof}
The map is defined using the pairing between the cohomology groups $H_{w_{01},\an}^1(\mathcal S_{H,\Iw}(p^t), \tau_A, \cusp)^{-,\dag}$ and $H_{w_{01},\an}^1(\mathcal S_{H,\Iw}(p^t), \tau_A)^{+,\dag}$. The result in the current form follows from \cite[Thm. 6.7.1]{boxerpilloni20}, from where it is clear that the pairing is compatible with Serre duality for each classical weight.
\end{proof}

\subsection{A K\"unneth formula for cohomology with support}

In order to define the $p$-adic $L$-function, we need to $p$-adically interpolate the cohomological pairing between $H^0$ and $H^1$. This may be regarded as a K\"unneth formula for cohomology with support.

\begin{proposition}\label{propo:pairing}
The cup product induces a pairing \[ H_{w_0,\an}^0(\mathcal S_{\GL_2,\Iw}(p^t), \tau_1)^{\dag} \times H_{w_1,\an}^1(\mathcal S_{\GL_2,\Iw}(p^t), \tau_2, \cusp)^{\dag} \longrightarrow H_{w_{01},\an}^1(\mathcal S_{H,\Iw}(p^t), \tau_A)^{-,\dag}, \] where $\tau_A = (\tau_1,\tau_2)$ is a weight for $H$.
\end{proposition}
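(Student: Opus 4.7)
The strategy is to exploit the product decomposition of $H = \GL_2 \times_{\GL_1} \GL_2$, which translates into a product structure on the Shimura variety, the flag variety, and the associated cohomology. The key geometric input is that $\FL_H \cong \FL_{\GL_2} \times \FL_{\GL_2}$, and the Kostant representative $w_{01}$ corresponds to the pair $(w_0, w_1)$, so the cell $C_{w_{01}}^H$ factorises as $C_{w_0}^{\GL_2} \times C_{w_1}^{\GL_2}$. Accordingly, up to connected components $\mathcal S_{H,\Iw}(p^t)$ identifies with the fibre product (over the determinant map) of two copies of $\mathcal S_{\GL_2,\Iw}(p^t)$, and the tube $\mathcal Z_m^H$ for $w_{01}$ factorises as a product $\mathcal Z_m^{(1)} \times \mathcal Z_m^{(2)}$, where the factors are the tubes of $w_0$ and $w_1$ on the first and second $\GL_2$-Shimura variety respectively.

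Next I would decompose the coefficient sheaves. Since the Levi $M_H = T$ is the diagonal torus, any $n$-analytic weight $\tau_A = (\tau_1,\tau_2)$ factorises as a pair of characters of $\ZZ_p^\times$, and the locally analytic principal series $V_{H,\kappa^H}^{n-\an}$ (resp.\ its dual $D_{H,\kappa^H+2\rho_H}^{n-\an}$) decomposes as an external tensor product of the corresponding $\GL_2$-representations. Via the two projections $p_1, p_2 \colon \mathcal S_{H,\Iw} \to \mathcal S_{\GL_2,\Iw}$, this yields an isomorphism
\[
 \mathcal D_{H,\Iw,\tau_A}^{\an} \;\cong\; p_1^{*}\mathcal D_{\GL_2,\tau_1}^{\an} \,\boxtimes\, p_2^{*}\mathcal D_{\GL_2,\tau_2}^{\an},
\]
and likewise for the $\mathcal V$-sheaves. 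With these two inputs in hand, the pairing is constructed by the standard external cup product on cohomology with supports: given $\xi \in H^0_{w_0,\an}(\mathcal S_{\GL_2,\Iw},\tau_1)^{\dag}$ supported on the $w_0$-tube of the first factor and $\eta \in H^1_{w_1,\an}(\mathcal S_{\GL_2,\Iw},\tau_2,\cusp)^{\dag}$ supported on the $w_1$-tube of the second, the product
\[
 p_1^{*}\xi \,\cup\, p_2^{*}\eta \;\in\; H^{1}_{\mathcal Z_m^H}\!\left(\mathcal S_{H,\Iw},\, \mathcal D_{H,\Iw,\tau_A}^{\an}(-D_H)\right)
\]
has total degree $0+1=1$, lives on the product tube $\mathcal Z_m^H$, and upon passing to the direct limit in $m$ gives the required class in $H^1_{w_{01},\an}(\mathcal S_{H,\Iw},\tau_A)^{-,\dag}$.

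To check compatibility with the finite-slope decorations, I would observe that the dual Hecke operators $\mathcal U_{\Si}', \mathcal U_{\Kl}'$ (or rather, their analogues on $H$) factorise as products of the dual $U_p'$-operators on the two $\GL_2$-factors under the embedding $T \hookrightarrow B_H$, so that pullback via $p_1$ and $p_2$ transports the finite-slope parts on each factor to the ``$-$'' finite-slope part on the target. The main technical obstacle I expect is in carrying out Step 1 rigorously: namely, making the identification of $\mathcal S_{H,\Iw}(p^t)$ with (components of) a product of two modular curves fully precise, including the compatibility of tame level structures and the behaviour of the equal-determinant condition. Once this geometric identification is set up, the remainder is a routine Künneth-type argument within the framework of overconvergent coherent cohomology developed in \cite{boxerpilloni20}.
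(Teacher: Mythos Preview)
Your proposal is correct and takes essentially the same approach as the paper: the paper's proof is a one-line citation (``This follows as in \cite[Thm.~9.6.2]{LZ21-erl} by the general theory of Boxer--Pilloni \cite[Thm.~6.7.1]{boxerpilloni20}''), and what you have written is precisely the K\"unneth-with-supports argument that those references encode, exploiting the product decomposition $\FL_H \cong \FL_{\GL_2} \times \FL_{\GL_2}$ and the factorisation of the $w_{01}$-tube. Your identification of the main technical point (making the product decomposition of $\mathcal S_{H,\Iw}(p^t)$ precise, including the equal-determinant condition and tame level) is apt, and is exactly what the cited framework handles.
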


\begin{proof}
This follows as in \cite[Thm. 9.6.2]{LZ21-erl} by the general theory of Boxer--Pilloni \cite[Thm. 6.7.1]{boxerpilloni20}.
\end{proof}

\section{The $p$-adic $L$-function}\label{sec:function}

In this section we discuss how to use higher Coleman theory to reinterpret the Harris--Su pairing, as discussed in \cite[\S3]{LR2}, in coherent cohomology over certain strata in suitable adic Shimura varieties. In particular, this analysis allows us to perform $p$-adic interpolation provided that there exist families of cohomology classes interpolating the different elements involved there. We implicitly use some of the results discussed along \cite[\S9,10]{LPSZ1}, as well as Novodvosky’s formula and its interpretation in coherent cohomology discussed in \cite{LR2}.

If not specified otherwise, $\pi$ and $\sigma$ are cohomological cuspidal automorphic representations of $\GSp_4$ and of $\GL_2$, defined over some number field $E$, both globally generic and unramified outside a certain finite set. Let $L$ be some $p$-adic field with an embedding from $E$.

\subsection{Tame test data}\label{sec:tame}

As in \cite[\S10.2]{LZ21-erl}, we fix the following data:
\begin{itemize}
\item $M_0,N_0$ are positive integers coprime to $p$ with $M_0^2 \mid N_0$, and $\chi_0$ is a Dirichlet character of conductor $M_0$ (valued in $L$).
\item $M_2, N_2$ are positive integers coprime to $p$ with $M_2 \mid N_2$, and $\chi_2$ is a Dirichlet character of conductor $M_2$ (valued in $L$).
\end{itemize}
As usual, we use the hat to denote the adelic counterpart of the characters. We shall consider automorphic representations $\pi$ of $G$ with conductor $N_0$ and central character $\hat \chi_0$ (up to twists by norm); here ``conductor'' is the analytic conductor of the associated degree 4 $L$-function, which always satisfies the divisibility $M_0^2 \mid N_0$. We assume similarly that the representation $\sigma$ of $\GL_2$ has conductor $N_2$ and character $\hat \chi_2$ (up to twists by norm).

Let $S$ denote the set of primes dividing $N_0N_2$. By tame test data we mean a pair $\gamma_S = (\gamma_{0,S},\Phi_S)$, where:
\begin{itemize}
\item $\gamma_{0,S} \in G(\QQ_S)$, where $\QQ_S = \prod_{\ell \in S} \QQ_{\ell}$;
\item $\Phi_S \in C_c^{\infty}(\QQ_S^2,L)$, lying in the $(\hat \chi_0 \hat \chi_2)^{-1}$-eigenspace for $\ZZ_S^{\times}$, where the action is as described in \cite[\S3]{LSZ17}.
\end{itemize}

We let $K_S$ be the quasi-paramodular subgroup (in the sense of \cite{okazaki}) of $G(\QQ_S)$ of level $(N_0,M_0)$, so that $\pi$ has one-dimensional invariants under $K_S$; and we let $\hat K_S$ be the open compact subgroup of $G(\QQ_S)$ defined in \cite[\S10.2]{LZ21-erl}. We also use analogous notations for $K^p$ and $\hat K^p$, the prime-to-$p$ part of the level and its adelic counterpart, respectively.


\subsection{$p$-adic families}



As a general piece of notation, we use the conventions regarding $p$-adic families of \cite[\S10.4]{LZ21-erl}. In particular, we consider $U \subset \mathcal W^2$ an open affinoid disc, and let $\hr_1, \hr_2 \, : \, \ZZ_p^{\times} \rightarrow \mathcal O(U)^{\times}$ be the universal characters associated to the two factors of $\mathcal W^2$. Let $\nu_U$ be the character $(\hr_1,\hr_2; \hr_1+\hr_2)$ of $T(\ZZ_p)$.

\begin{definition}
A {\it family of automorphic representations} $\underline{\pi}$ of tame level $N_0$ and character $\chi_0$ over an open affinoid disc $U$ is the data of a finite flat covering $\tilde U \rightarrow U$ and a homomorphism $\tilde U \rightarrow \mathcal E$ lifting the inclusion $U \hookrightarrow \mathcal W$, such that the following conditions hold:
\begin{enumerate}
\item[(a)] $\tilde U$ is 2-dimensional and smooth;
\item[(b)] the restriction of $H^k(\mathcal M_{\text{cusp},w_j}^{\bullet,-,fs})$ to $\tilde U$ is zero if $j+k \neq 3$, and the sheaves $S^k(\underline{\pi}) = H^k(\mathcal M_{\text{cusp},w_{3-k}}^{\bullet,-,fs})$ are either free over $\mathcal O(\tilde U)$ of rank 1 for all $k$ (general-type), or free of rank 1 for $k=1,2$ and zero for $k=0,3$ (Yoshida-type);
\item[(c)] the centre of $G(\AA_{\fin}^p)$ acts on $S^k(\underline{\pi})$ by $|\cdot|^{-(\hr_1+\hr_2)} \hat \chi_0$.
\end{enumerate}
\end{definition}

Assume that the representation $\pi$ can be interpolated along a finite-slope overconvergent $p$-adic family of automorphic representations $\underline{\pi}$ over the open affinoid disc $U$ introduced at the beginning of the section. (Given any cohomological $\pi$ with sufficiently small slope at $p$, the theory of eigenvarieties guarantees that we can always find a sufficiently small open disc around the weight of $\pi$ such that this holds.) Let $S^1(\underline{\pi}) = H^1(\mathcal M_{\text{cusp},w_2}^{\bullet,-,fs})$ be the sheaf introduced in \cite[Def. 10.4.1]{LZ21-erl}, which is free of rank 1 according to the definition we have made. We shall then choose a basis $\underline{\xi}$ of that space. Since the spaces of higher Coleman theory have an action of $G(\AA_{\fin}^p)$, we can make sense of $\gamma_{0,S} \cdot \underline{\xi}$ as a family of classes at tame level $\hat K^p$, which is still an eigenfamily for the Hecke operators away from $S$.

\begin{definition}
We say a point $P \in U(L)$ is nice for ${\underline \pi}$ if the weight of $P$ is $(r_1,r_2) \in U \cap \ZZ^2$ with $r_1 \geq r_2 \geq 0$ and the specialisation at $P$ of the system of eigenvalues $\lambda_{\underline{\pi}}^-$ attached to the family $\underline{\pi}$ is the character of a $p$-stabilised automorphic representation $\pi_P$, which is cuspidal, globally generic, and has conductor $N_0$ and character $\chi_0$.
\end{definition}

This implies that the fibre of $S^1(\underline{\pi})$ at $P$ maps isomorphically to the $\pi_P$-eigenspace in the classical $H^1(K^p,\kappa_1(\nu),\cusp)$; in particular, this eigenspace is 1-dimensional. By the classicality theorems for higher Coleman theory, given a family ${\underline{\pi}}$, all specialisations of integer weight $(r_1,r_2)$ with $r_1-r_2$ and $r_2$ sufficiently large relative to the slope of ${\underline{\pi}}$ will be nice; and if ${\underline{\pi}}$ is ordinary, it suffices to assume that $r_1-r_2 \geq 3$ and $r_2 \geq 0$.

We can consider analogous objects for $\GL_2$. In particular, we may choose a disc $U' \subset \mathcal W$ and a finite-slope overconvergent $p$-adic family of modular eigenforms $\mathcal G$ over $U'$ (of weight $\hl+2$ where $\hl$ is the universal character associated to $U'$). In particular, we also impose that the corresponding spaces $S^0(\mathcal G)$ and $S^1(\mathcal G)$ are free of rank one. Then, we say a point $Q \in U'$ is nice for $\mathcal G$ if it lies above an integer $\ell \in U' \cap \ZZ_{\geq 0}$, and the specialisation of $\mathcal G$ at $Q$ is a classical form. We further require that the fibre of $S^1(\underline{\sigma})$ at $Q$ maps isomorphically to the $\sigma_Q$-eigenspace in the classical $H^1$ (and in particular, this eigenspace is 1-dimensional).
We write $\sigma_{\ell}$ for the corresponding automorphic representation, with the normalisations of loc.\,cit. As before, we shall take a basis $\underline{\eta}$ of $S^1(\underline{\sigma})$.

\begin{remark}
The inequalities defining region $(D)$ automatically imply that we are not dealing with non-cohomological weights, and hence we do not need to consider an \'etale covering, as it was the case for region $(F)$.
\end{remark}

\subsection{Construction of the imprimitive $p$-adic $L$-function}



We refer to \cite[\S7.4]{LPSZ1} for the construction of the $p$-adic family of Eisenstein series $\mathcal E^{\Phi^{(p)}}(0,\htt-1)$, which depends on a prime-to-$p$ Schwartz function $\Phi^{(p)}$. According to \cite[Prop. 10.1.2]{LZ21-erl}, it is an overconvergent cusp form of weight $\htt$ which may be understood as an element in the overconvergent $H^0$ of $\mathcal S_{\GL_2,\Iw}$. As discussed at the end of \S10.2 of loc.\,cit., $\Phi^{(p)}$ and $\Phi_S$ agree up to multiplication by the characteristic function of $\hat{\ZZ}^{S \cup \{p\}}$.

For our construction, we need to recall the pairing \[ H_{w_0,\an}^0(\mathcal S_{\GL_2,\Iw}(p^2), \tau_1)^{\dag} \times H_{w_1,\an}^1(\mathcal S_{\GL_2,\Iw}(p^2), \tau_2, \cusp)^{\dag} \longrightarrow H_{w_{01},\an}^1(\mathcal S_{H,\Iw}(p^2), \tau_A)^{-,\dag} \] introduced in Proposition \ref{propo:pairing}. From now on, let $A = \mathcal O(U \times U')$. Next, we can consider \[ \mathcal E^{\Phi^{(p)}}(0,\htt-1) \boxtimes G(\chi_2^{-1}) \underline{\eta} \in H_{w_{01},\an}^1(\mathcal S_{H,\Iw}(p^2), \tau_A)^{+,\dag}, \] where $\htt = \hr_2-\hr_1+\hl-2$ and the tame level is taken to be $H \cap \hat K^p$.

\begin{remark}
 The Gauss sum can be normalised away by re-scaling $\underline{\eta}$ if the coefficient field $L$ contains a root of unity of order $M_2$, but we do not assume this here.
\end{remark}

\begin{definition}\label{def:p-adic}
We let $\mathcal L_{p,\gamma_S}^{\imp}(\pi \times \sigma; \underline{\xi}; \underline{\eta})$ denote the element of $A$ defined by \[ \langle (\iota^{\sharp})^* ( \gamma_{0,S} \cdot \underline{\xi} ), \mathcal E^{\Phi^{(p)}}(0,\htt-1) \boxtimes G(\chi_2^{-1}) \underline{\eta} \rangle, \] where we are using the pairing of Theorem \ref{thm:pairing}.
\end{definition}

This is a three-variable $p$-adic $L$-function, where we may vary the weights $(r_1,r_2)$ and we keep the linear condition in terms of $(r_1,r_2,\ell,t)$, namely $\htt = \hr_2-\hr_1+\hl-2$ (alternatively, $s=\frac{\ell-r_1+r_2-2}{2}$).


\begin{definition}
\begin{itemize}
\item We say a point $(P,Q)$ of $U \times U'$ is nice if $P=(r_1,r_2)$ and $Q=(\ell)$ are integer points, with $P$ nice for $\underline{\pi}$ and $Q$ nice for $\underline{\sigma}$.
\item We say that $(P,Q)$ is nice critical if we also have $\ell \leq  r_1-r_2+1$ (the specialisation $t$ of $\htt$ at $(P,Q)$ is $\geq -1$).
\item If instead we have $r_1-r_2 \leq \ell - 2 \leq r_1$, we say that $P$ is nice geometric.
\end{itemize}
\end{definition}

\subsection{The correction term $Z_S$}\label{sec:ZS}

This section introduces a correction term $Z_S$ which depends on the choice of local data, and which will arise in the interpolation property of the $p$-adic $L$-function. Its definition depends on certain Whittaker models properly introduced in \cite[\S6]{LR2}; since this will have a minor relevance in this work, we just refer the interested reader to our previous paper. In particular, with the notations introduced in loc.\,cit., we may consider the integral $Z(W,\Phi_1,W^{(\ell)};s)$.

We shall set \[ Z_S(\pi \times \sigma, \gamma_S; s) = \frac{Z(\gamma_{0,S} \cdot W_0^{\text{new}}, \Phi_S, W_2^{\text{new}}; s)}{G(\chi_2^{-1}) \prod_{\ell \in S} L(\pi_{\ell} \times \sigma_{\ell}, s)}, \] and \[ Z_S(\pi \times \sigma, \gamma_S) = Z_S(\pi \times \sigma, \gamma_S; 1 + \frac{t}{2}), \] where $t = r_2-r_1-2+\ell$, as usual, and $G(\chi_2^{-1})$ is the Gauss sum of the character $\chi_2$. Note that for any given $\pi$ and $\sigma$, one can choose $\gamma_S$ such that $Z_S(\pi \times \sigma, \gamma_S;s) \neq 0$ (this follows from the definition of the $L$-factor as a GCD of local zeta-integrals).





\subsection{Interpolation property}

We choose a $\bar{\QQ}$-basis $\xi$ of the new subspace of $H^1(\pi_{\fin})$, where $H^1(\pi_{\fin})$ is the copy of $\pi_{\fin}$ appearing in the degree 1 coherent cohomology of the Siegel Shimura variety. Analogously, we also choose a $\bar{\QQ}$-basis $\eta$ of the new subspace of $H^1(\sigma_{\fin})$. We write $S^1(\pi,L)$ for the cohomology with $L$-coefficients, which is an $L$-vector space.

The element $\xi \otimes \eta$ is an explicit multiple of the standard Whittaker function, and the corresponding multiple defines a complex period $\Omega_{\infty}(\pi,\sigma) \in \CC^{\times}$.

\begin{definition}\label{def:periods}
Given non-zero $\xi \in S^1(\underline{\pi},L)$ and $\eta \in S^1(\underline{\sigma},L)$, we define periods $\Omega_p(\underline{\pi},{\underline \sigma}) \in L^{\times}$ and $\Omega_{\infty}(\underline{\pi},{\underline \sigma}) \in \CC^{\times}$ as in \cite[\S10.2]{LPSZ1}. (These periods do depend on the choices of $\xi$ and $\eta$ up to multiplication by $L^{\times}$, but we drop that dependence from the notation). We write $\Omega_p(\pi_P,\sigma_Q)$ and $\Omega_{\infty}(\pi_P,\sigma_Q)$ for the specialization of the periods at $(P,Q)$.
\end{definition}

More precisely, the space of Whittaker-$E$-rational classes is exactly $\Omega_{\infty}(\pi,\sigma) \cdot H^1(\pi_{\fin}) \otimes H^1(\sigma_{\fin})$, for a nonzero constant $\Omega_{\infty}(\pi,\sigma) \in \CC^{\times}$, that we call the Whittaker period.

In the following result we establish the interpolation property for the $p$-adic $L$-function; observe that the algebraicity of the right-hand side was the main result of \cite{LR2}. Recall the degree-8 Euler factor $\mathcal{E}^{(d)}$ of \cite{LZvista}, that we call $\mathcal{E}^{(D)}$ in the $\GSp_4 \times \GL_2$ setting of \cite{LR2}.

\begin{theorem}\label{thm:d}
The $p$-adic $L$-function $\mathcal L_{p,\gamma_S}^{\imp}(\underline{\pi} \times \underline{\sigma})$ has the following interpolation property: if $(P,Q)$ is nice critical, with $P$ of weight $(r_1,r_2)$ and $Q$ of weight $\ell$, then \[ \frac{\mathcal L_{p,\gamma_S}^{\imp}(\underline{\pi} \times \underline{\sigma})(P,Q)}{\Omega_p(\pi_P,\sigma_Q)} = Z_S(\pi_P \times \sigma_Q, \gamma_S) \cdot \mathcal E^{(D)}(\pi_P \times \sigma_Q) \cdot \frac{G(\chi_2^{-1}) \Lambda(\pi_P \times \sigma_Q, \frac{\ell-r_1+r_2-2}{2})}{\Omega_{\infty}(\pi_P, \sigma_Q)}, \] where $\Lambda(\pi_P \times \sigma_Q, s)$ is the completed (complex) $L$-function and $G(\chi_2^{-1})$ is the Gauss sum of $\chi_2^{-1}$.
\end{theorem}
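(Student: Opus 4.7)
The plan is to reduce the interpolation identity to the computation of the Harris--Su / Novodvorsky integral at the classical point $(P,Q)$, and then to read off the $p$-adic Euler factor $\mathcal{E}^{(D)}$ as the ratio between the value of the Coleman-theoretic pairing and the corresponding classical cup-product. The four-step strategy listed in the introduction is explicit enough to guide the organization: steps (1)--(2) are already available through \cite{LR2} and Theorem \ref{thm:pairing}, while step (4) is the specialisation of the cup-product along $A \to L$ at a nice critical point.

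First I would specialise Definition \ref{def:p-adic} at the point $(P,Q)$. Since $P$ and $Q$ are nice, the sheaves $S^{1}(\underline{\pi})$ and $S^{1}(\underline{\sigma})$ are free of rank one and their fibres at $P$, $Q$ map isomorphically onto the $\pi_{P}$-, resp.\ $\sigma_{Q}$-eigenspaces in classical coherent cohomology; hence $\underline{\xi}|_{P}$ is (a nonzero scalar multiple of) a classical class $\xi_{P} \in H^{1}_{w_{2}}(S_{G,\Iw},\mathcal{V}_{\kappa_{1}(\nu_{P})})[\pi_{P}]$, and likewise for $\eta_{Q}$. Likewise the Coleman family of Eisenstein series $\mathcal{E}^{\Phi^{(p)}}(0,\htt-1)$ specialises, at a nice critical point (where the exponent $t = r_2-r_1+\ell-2 \ge -1$ is a classical weight), to a genuine $p$-depleted Eisenstein series $E^{\Phi^{(p)}}(0,t-1)$ of weight $t$ by the calculation of \cite[\S7.4]{LPSZ1}. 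By the base-change compatibility of the branching map (the proposition of Section 4.7) and of the cup-product pairing (Theorem \ref{thm:pairing} and Proposition \ref{propo:pairing}), we obtain
\[
 \mathcal L_{p,\gamma_S}^{\imp}(\underline{\pi}\times\underline{\sigma})(P,Q)
  = \bigl\langle (\iota^\sharp)^{\ast}(\gamma_{0,S}\cdot \xi_{P}),\ E^{\Phi^{(p)}}(0,t-1) \boxtimes G(\chi_{2}^{-1})\,\eta_{Q}\bigr\rangle,
\]
where the pairing is now the classical Serre-duality pairing on $\mathcal{S}_{H,\Iw}$.

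Next I would identify this classical cup-product with the Novodvorsky zeta integral. This is exactly the content of the Harris--Su-type computation carried out in \cite{LR2}: the pull-back $(\iota^{\sharp})^{\ast}$ is the geometric avatar of restriction to $H$, and cup-product against the $\GL_{2}\times\GL_{2}$ class $E \boxtimes \eta$ followed by integration computes, up to the periods $\Omega_{\infty}(\pi_P,\sigma_Q)$ and $\Omega_{p}(\pi_P,\sigma_Q)$ appearing in Definition \ref{def:periods}, the adelic zeta integral
\[
 Z(\gamma_{0,S}\cdot W_{0}^{\mathrm{new}},\,\Phi_{S},\,W_{2}^{\mathrm{new}};\, 1+\tfrac{t}{2}) \cdot Z^{(pS)}(\pi_P\times\sigma_Q,1+\tfrac{t}{2}) \cdot Z_{p}(\pi_P\times\sigma_Q,1+\tfrac{t}{2}),
\]
where the unramified factor $Z^{(pS)}$ away from $pS$ unfolds to $L^{(pS)}(\pi_P\times\sigma_Q,1+\tfrac{t}{2})$ by Novodvorsky's formula. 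Dividing out by the $p$-adic period $\Omega_{p}(\pi_P,\sigma_Q)$ on the left and by $\Omega_{\infty}(\pi_P,\sigma_Q)$ on the right is the content of the algebraicity result of \cite{LR2}.

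Finally, the tame part of the zeta-integral produces, by the definition of $Z_{S}$ in Section \ref{sec:ZS}, the factor $Z_{S}(\pi_P\times\sigma_Q,\gamma_{S})\cdot G(\chi_{2}^{-1}) \prod_{\ell\in S}L(\pi_\ell\times\sigma_\ell,1+\tfrac{t}{2})$; combining with the archimedean $\Gamma$-factor (which converts $L$ to $\Lambda$) and the unramified factors away from $pS$ assembles the completed $L$-value $\Lambda(\pi_P\times\sigma_Q,\tfrac{\ell-r_1+r_2-2}{2})$. The remaining piece is the local factor at $p$ coming from the use of the $p$-depleted Eisenstein series together with the Siegel-ordinary / Klingen-ordinary projectors implicit in the finite-slope parts $(-,\mathrm{fs})$: this ratio between the $p$-depleted local integral and the local $L$-factor at $p$ is precisely the definition of $\mathcal{E}^{(D)}(\pi_P\times\sigma_Q)$ in \cite[\S4.3]{LZvista}, and matches the computation in \cite[\S7]{LR2}. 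The hardest step is verifying that the local factor at $p$ obtained from the Coleman pairing agrees on the nose with the conjectural Euler factor $\mathcal{E}^{(D)}$; this is a concrete but delicate computation that proceeds by evaluating the $p$-component of the zeta integral on the Iwahori-fixed vectors singled out by the $\mathcal{U}_{\Si}'$- and $\mathcal{U}_{\Kl}'$-eigenvectors and matching it with the product of $(1-\alpha_i\beta_j/p^{s})$-type factors predicted in loc.\ cit.
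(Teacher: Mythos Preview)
Your proposal is correct and follows essentially the same route as the paper's proof: specialise the defining pairing at $(P,Q)$ using the base-change compatibility of Theorem~\ref{thm:pairing} and Proposition~\ref{propo:pairing}, then invoke the Harris--Su/Novodvorsky computation of \cite{LR2} to unfold the resulting classical cup-product as a product of local zeta-integrals, with the factor at $p$ yielding $\mathcal{E}^{(D)}$ via \cite[\S7]{LR2} and the factors at $S$ yielding $Z_S$ by definition. Your write-up is in fact more detailed than the paper's (which compresses all of this into three sentences), but the logical structure is identical.
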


\begin{proof}
By construction, we have
\[ \mathcal L_{p,\gamma_S}^{\imp}(\underline{\pi} \times \underline{\sigma})(P,Q) = G(\chi_2^{-1}) \langle (\iota^\sharp)^* ( \gamma_{0,S} \cdot \xi_P ), \mathcal E^{\Phi^{(p)}}(0,t-1) \boxtimes \eta_Q \rangle. \]

Along the region given by $\ell-t = r_1-r_2+2$, this expands as the product of $G(\chi_2^{-1}) \Lambda(\pi_P \times \sigma_Q, \frac{t}{2})$ and a product of normalised local zeta-integrals. The local zeta-integral at $p$ has been evaluated in \cite[\S7]{LR2} and gives the desired Euler factor. The product of zeta-integrals at the bad primes is by definition $G(\chi_2^{-1}) Z_S(...)$.

\end{proof}

\begin{remark}
Taking into account the discussions of \cite[Rmk. 7.14]{LR2}, it is possible to use this same method to get an improved $p$-adic $L$-function where the interpolation property involves a degree seven Euler factor. Further, following the recent work of Graham, Pilloni and Rodrigues Jacinto \cite{GPR}, it should be possible to extend the previous construction to a $p$-adic $L$-function in all four variables.
\end{remark}

\section{A conjectural reciprocity law}

\subsection{Slope conditions}

 We now recall various notions of \emph{slope} associated to automorphic representations of $G$ and $H$. Given a cohomological automorphic representation $\pi$ of $G$ such that $\pi_p$ has non-zero invariants under $\Iw_G$ (with a chosen embedding of its coefficient field into $\overline{\QQ}_p$), and a simultaneous eigenspace in $(\pi_p)^{\Iw_G}$ for the operators $\mathcal{U}_{\Si}'$ and $\mathcal{U}_{\Kl}'$, we define the \emph{slope} of this eigenspace to be the pair of rational numbers $\lambda(\mathcal{U}_{\Si}')$, $\lambda(\mathcal{U}_{\Kl}')$ which are the valuations of the eigenvalues for these operators. These slopes play a central role in the classicity criteria of \cite{boxerpilloni20}. (One can use either the usual Hecke operators $\mathcal{U}_{\Si}$ and $\mathcal{U}_{\Kl}$, or the dual operators $\mathcal{U}_{\Si}'$ and $\mathcal{U}_{\Kl}'$, since the same eigenvalues appear for both choices.)

 If $\lambda(\mathcal{U}_{\Si}') = 0$ we say the eigenspace is \emph{Siegel-ordinary}, and similarly \emph{Klingen-ordinary}; and we say that $\pi$ is Borel-ordinary at $p$ if it is both Siegel and Klingen ordinary. The condition of being Siegel ordinary at $p$ may be rephrased by requiring that $v_p(\alpha)=0$, and being Klingen ordinary is equivalent to $v_p(\alpha \beta ) = r_2+1$.

For a cuspidal automorphic representation $\sigma$ of $\GL_2$, write $\mathfrak a, \, \mathfrak b$ for the Hecke parameters of $\sigma_p$ (that is, the parameters corresponding to the action of the dual Hecke operators). We adopt the convention that $v_p(\mathfrak a) \leq v_p(\mathfrak b)$ and say that $\sigma$ is Borel-ordinary at $p$ (with respect to $v$) if $v_p(\mathfrak a) = 0$.

The hypotheses of the classicity theorems in \cite{boxerpilloni20} require two (slightly different) notions of ``small slope'', which we shall make explicit here. We consider the Hecke operators with the previously discussed normalizations acting on the cohomology of the sheaves $\mathcal V_{\kappa}$. Thus each operator is ``minimally integrally normalised" acting on the classical cohomology (slopes are $\geq 0$). Write $K^p$ for some fixed choice of open compact away from $p$. Then \cite[Conj. 5.9.2]{boxerpilloni20} predicts lower bounds for the slopes of the Hecke operators acting on the overconvergent cohomology complexes $R\Gamma_w(K^p,\kappa)^{\pm}$ and $R\Gamma(K^p,\kappa,\cusp)^{\pm}$, whose precise definitions are given in loc.\,cit.; and there are similar conjectures for the locally-analytic cohomology complexes.

We compute for various elements $w \in W_G$ the character $w^{-1}w_G^{\max}(\kappa+\rho)-\rho$, and how it pairs with the anti-dominant cocharacters $\diag(1,1,x,x,)$ and $\diag(1,x,x,x^2)$ defining the operators $\mathcal U_{\Si}'$ and $\mathcal U_{\Kl}'$. We take $\kappa = \kappa_2 = (r_2-1,-r_1-3;r_1+r_2)$, and subtract $r_2$ from all entries in the bottom row since this is our normalising constant for $\mathcal U_{\Kl}'$. Following the approach of \cite[\S5.11]{boxerpilloni20}, we summarize the conjectural slope bounds in the following table.

\begin{center}
\begin{tabular}{p{2cm}|p{2cm}p{2cm}p{2cm}p{2cm}}
$w =$ & $\id$ & $w_1$ & $(w_2)$ & $w_3$ \\
\hline
$\mathcal U_{\Si}'$ & $r_1+2$ & $0$ & $(0)$ & $r_2+1$ \\
$\mathcal U_{\Kl}'$ & $r_1-r_2+1$ & $r_1-r_2+1$ & $(0)$ & $0$ \\
\end{tabular}
\end{center}

We do not know this conjecture in full, but from \cite[Thm. 5.9.6, Thm. 6.8.3]{boxerpilloni20}, we do know a weaker statement in which we replace $w^{-1}w_G^{\max}(\kappa_2+\rho)-\rho$ with $w^{-1}w_G^{\max}\kappa_2$. This gives the following bounds:

\begin{center}
\begin{tabular}{p{2cm}|p{2cm}p{2cm}p{2cm}p{2cm}}
$w =$ & $\id$ & $w_1$ & $(w_2)$ & $w_3$ \\
\hline
$\mathcal U_{\Si}'$ & $r_1+2$ & $-1$ & $(-1)$ & $r_2-2$ \\
$\mathcal U_{\Kl}'$ & $r_1-r_2+1$ & $r_1-r_2+1$ & $(-3)$ & $-3$ \\
\end{tabular}
\end{center}

The following proposition discusses the conditions of ``small slope" and ``strictly small slope". The reason for introducing different slope conditions is that the conditions needed to obtain a vanishing theorem are not the same as those needed to obtain classicality theorems; further, there are different kinds of control theorems requiring distinct sets of hypotheses.

\begin{proposition}
For the weight $\kappa_2 = (r_2-1,-r_1-3; r_1+r_2)$, with $r_1 \geq r_2 \geq 0$, we have the following.
\begin{itemize}
\item The ``small slope" condition $(-,\ssa^M(\kappa_2))$ is \[ \lambda(\mathcal U'_{\Si}) < r_1+2, \quad \lambda(\mathcal U'_{\Kl}) < r_1-r_2+1. \]
\item The ``strictly small slope" condition $(-,\sss^M(\kappa_2))$ is \[ \lambda(\mathcal U'_{\Si}) < r_1+2, \quad \lambda(\mathcal U'_{\Kl}) < r_1-r_2-2. \]
\end{itemize}
\end{proposition}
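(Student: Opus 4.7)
The plan is to specialise the definitions of the small and strictly small slope conditions from \cite[\S5.10--5.11, \S6.8]{boxerpilloni20} to the weight $\kappa_2 = (r_2-1,-r_1-3;r_1+r_2)$ and the Kostant representative $w_2$. Both conditions are purely combinatorial, so the proof will reduce to verifying the two tables displayed above and then reading off the binding inequalities.

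First I would verify the tables directly. Starting from $\rho = (2,1;0)$, compute $\kappa_2 + \rho = (r_2+1,-r_1-2;r_1+r_2)$; then, using the explicit matrices for the $w_i$ from \cite[\S4]{LZ21-erl} and the fact that $w_G^{\max}$ acts on weights of $\GSp_4$ by $(a,b;c) \mapsto (-a,-b;c)$, apply $w^{-1} w_G^{\max}$ for each $w \in \{\id, w_1, w_2, w_3\}$, subtract $\rho$, and pair the result with the antidominant cocharacters defining $\mathcal U'_{\Si}$ and $\mathcal U'_{\Kl}$ (subtracting the normalising constant $r_2$ in the Klingen case because of the factor $p^{-r_2}$ built into the definition of $\mathcal U_{\Kl}$). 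This reproduces the first (strong/conjectural) table. The second, weaker table is obtained by the same procedure with $\kappa_2$ in place of $\kappa_2 + \rho$; the entrywise discrepancy between the two tables is exactly the pairing of $\rho$ with each conjugate cocharacter, which is easy to record.

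With the tables in hand, the small slope condition $(-,\ssa^M(\kappa_2))$ at $w_2$ is extracted by requiring the slopes of $\mathcal U'_{\Si}$ and $\mathcal U'_{\Kl}$ to be strictly smaller than the binding entries of the first table. In our setting these constraints come from the $w = \id$ column for both operators, giving $\lambda(\mathcal U'_{\Si}) < r_1+2$ and $\lambda(\mathcal U'_{\Kl}) < r_1-r_2+1$, which is the first assertion. The strictly small slope condition $(-,\sss^M(\kappa_2))$ follows analogously from the second (unconditional) table: the Siegel bound is unchanged, since the $\id$-entry is unaffected by the $\rho$-shift, while the Klingen bound decreases by $3$ (matching the pairing of $\rho$ with the Klingen cocharacter at the binding Weyl element), producing the asserted $\lambda(\mathcal U'_{\Kl}) < r_1-r_2-2$.

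The main obstacle is the correct identification of which table entries are binding for $w_2$. A naive minimum over all $w \neq w_2$ would give much tighter (and wrong) bounds, whereas the actual recipe in \cite[\S5.11]{boxerpilloni20}, specialised to the $(-,-)$ polarity, singles out a specific subset of Weyl elements determined by the Bruhat order relative to $w_2$ together with the antidominance of the cocharacter. Verifying that this subset reduces to $\{\id\}$ for $\mathcal U'_{\Si}$ and $\{\id, w_1\}$ for $\mathcal U'_{\Kl}$ is the main combinatorial content of the proof; once established, both parts of the proposition follow by mechanical unwinding of the two tables.
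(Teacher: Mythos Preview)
Your approach is essentially the paper's: both amount to computing the two tables of slope bounds (via $w^{-1}w_G^{\max}(\kappa_2+\rho)-\rho$ and $w^{-1}w_G^{\max}\kappa_2$, paired with the Siegel and Klingen cocharacters) and then extracting the two conditions; the paper's proof is literally ``this follows from the previous tables''. Your plan for producing the tables and reading off the $\ssa^M$ bounds is fine.

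There is, however, a genuine gap in your treatment of the strictly small slope condition. You claim that the Klingen bound $r_1-r_2-2$ arises because ``the Klingen bound decreases by $3$'' in passing from the first table to the second at the binding Weyl elements $\{\id,w_1\}$. But look at the tables: the $\mathcal U'_{\Kl}$ entries at $w=\id$ and $w=w_1$ are $r_1-r_2+1$ in \emph{both} tables, unchanged by the $\rho$-shift (indeed the entries that drop by $3$ are those at $w_2$ and $w_3$). The number $r_1-r_2-2$ does not occur anywhere in the second table, so it cannot be obtained by the recipe you describe. The $\sss^M$ condition in \cite[\S6.8]{boxerpilloni20} is not simply ``read the second table at the same binding elements as for $\ssa^M$''; it involves a different combinatorial comparison (in particular, the passage from overconvergent to locally analytic cohomology brings in an extra shift that is not captured merely by replacing $\kappa_2+\rho$ with $\kappa_2$ at the $\{\id,w_1\}$ columns). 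You need to go back to the actual definition of $\sss^M$ in Boxer--Pilloni and trace through where the extra $-3$ on the Klingen side enters; your parenthetical about ``the pairing of $\rho$ with the Klingen cocharacter at the binding Weyl element'' is pointing in roughly the right direction numerically, but as written it contradicts your own identification of the binding set and the displayed tables.
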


\begin{proof}
This follows from the previous tables.
\end{proof}

\subsection{Ordinary filtrations at $p$}

Along the rest of this section we assume that $\pi$ is both Klingen and Siegel ordinary, and that $\sigma$ is Borel ordinary. This is done just with the purpose of simplifying notations; similar conjectures can be formulated in the more general strictly-small-slope setting, but one needs to use the theory of $(\varphi,\Gamma)$-modules over the Robba ring (rather than actual subrepresentations of Galois representations). Further, one expects to be able to formulate integral refinements in the ordinary setting, using Coleman maps instead of the Perrin-Riou map, and making $\mathcal L_p$ a $p$-adic measure instead of just a distribution. We begin by discussing the slope conditions.

Associated with the family $\underline{\pi}$ we have a family of Galois representations $V(\underline{\pi})$, which is a rank 4 $\mathcal O(U)$-module with an action of $\Gal(\overline{\QQ}/\QQ)$, unramified outside $pN_0$ and with a prescribed trace for $\Frob_{\ell}^{-1}$, when $\ell \nmid pN_0$. The Galois representation $V(\underline{\pi})$ has a decreasing filtration by $\mathcal O(U)$-submodules stable under $\Gal(\overline{\QQ}_p/\QQ_p)$. Borrowing the notations from \cite[\S11]{LZ21-erl}, we write $\mathcal F^i V(\underline{\pi})$ for the codimension $i$ subspace, and similarly for its dual $V(\underline{\pi})^*$. Similarly, there is a 2-step filtration for $V(\underline{\sigma})$. See e.g. \cite[\S9.1]{LZvista} for a precise account of the feature of the different filtrations involved in the picture.

\begin{definition}
We set \[ \mathbb V^* = V(\underline{\pi})^* \otimes V(\underline{\sigma}) (-1-\hr_1); \] and we let \[ \mathcal F^{(D)} V(\underline{\pi} \times \underline{\sigma})^* = (\mathcal F^1 V(\underline{\pi})^* \otimes \mathcal F^1 V(\underline{\sigma})^*) + (\mathcal F^3 V(\underline{\pi})^* \otimes V(\underline{\sigma})^*) \] and \[ \mathcal F^{(E)} V(\underline{\pi} \times \underline{\sigma})^* = (\mathcal F^1 V(\underline{\pi})^* \otimes \mathcal F^1 V(\underline{\sigma})^*) + (\mathcal F^2 V(\underline{\pi})^* \otimes V(\underline{\sigma})^*). \] For a nice weight $(P,Q)$ we write $\mathbb V_{P,Q}^*$ for the specialisation of $\mathbb V^*$ at $(P,Q)$, so $\mathbb V_{P,Q}^* = V(\pi_P)^* \otimes V(\sigma_Q)^*(-1-r_1)$ if $P=(r_1,r_2)$.
\end{definition}

In particular, $\mathcal F^{(E)}$ has rank 5, $\mathcal F^{(D)}$ has rank 4, and the quotient $\Gr^{(e/d)}$ is isomorphic to \[ \Gr^{(E/D)} \cong (\Gr^2 V(\underline{\pi})^*) \otimes (\Gr^0 V(\underline{\sigma})^*) (-1-\hr_1). \] Observe that while in \cite{LZ21-erl} the quotient we were interested in was $(\Gr^1 V(\underline{\pi})^*) \otimes (\Gr^1 V(\underline{\sigma})^*)$, here we are using a different step of the filtration. This is because in loc.\,cit. we were comparing the regions $(e)$ and $(f)$, while here the contrast is between $(e)$ and $(d)$, so the filtrations involved in each factor are different.

\subsection{$p$-adic periods and $p$-adic Eichler--Shimura isomorphisms}

The representations $\Gr^2 V(\underline{\pi})(-2-\hr_1)$ and $\Gr^0 V(\underline{\sigma})$ are unramified, and hence crystalline as $\mathcal O(U)$ (resp. $\mathcal O(U')$)-linear representations. Since $\DD_{\cris}(\QQ_p(1))$ is canonically $\QQ_p$, we can therefore define $\DD_{\cris}(\Gr^{(e/d)} \mathbb V^*)$ to be an alias for the rank 1 $\mathcal O(U \times U')$-module \[ \DD_{\cris}(\Gr^2 V(\pi)^*(-2-\hr_1)) \hat \otimes \DD_{\cris}(\Gr^0 V(\underline{\sigma})^*). \] We can then define a Perrin-Riou big logarithm for $\Gr^{(e/d)} \mathbb V^*$, which is a morphism of $\mathcal O(U \times U')$-modules \[ \mathcal L^{\PR} : H^1(\QQ_p, \Gr^{(e/d)} \mathbb V^*) \longrightarrow \DD_{\cris}(\Gr^{(e/d)} \mathbb V^*). \] For nice geometric weights $P$, this specialises to the Bloch--Kato logarithm map, up to an Euler factor; and for nice critical weights is specialises to the Bloch--Kato dual exponential.

Let $P$ be a nice weight. There is an {\it Eichler--Shimura} isomorphism \[ \ES_{\pi_P}^1 : S^1(\pi_P, L) \cong \DD_{\cris}(\Gr^2(V(\pi_P))). \]

Similarly, for $\GL_2$ we have an isomorphism \[ \ES_{\sigma_Q}^1 : S^1(\sigma_Q, L) \cong \DD_{\cris}(\Gr^0 V(\sigma_Q)). \] In this case, the existence of a comparison in families is known after Kings--Loeffler--Zerbes \cite{KLZ17}, that is, there exists an isomorphism of $\mathcal O(U')$-modules \[ \ES_{\underline{\sigma}}^1 : S^1(\underline{\sigma}) \cong \DD_{\cris}(\Gr^0 V(\underline{\sigma})) \] interpolating the isomorphism $\ES_{\sigma_Q}^1$ for varying $Q$, where $\mathcal S^1(\underline{\sigma})$ is the $\mathcal O(U')$-module spanned by $\underline{\eta}$.

\subsection{Euler system classes}

Suppose that the character $\chi_0 \chi_2$ is non-trivial. Then, by the results of \cite{HJS20}, associated to the data $\gamma_S$, we have a family of cohomology classes \[ \hz_m(\underline{\pi} \times \underline{\sigma}, \gamma_S) \in H^1(\QQ(\mu_m), \mathbb V^*), \] for all square-free integers coprime to some finite set $T$ containing both $p$ and the ramified primes. The image of $\hz_m(\underline{\pi} \times \underline{\sigma},\gamma_S)$ under localisation at $p$ lands in the image of the injective map from the cohomology of $\mathcal F^{(E)} \mathbb V^*$ and we can therefore make sense of \[ \mathcal L^{\PR}(\hz_m(\underline{\pi} \times \underline{\sigma}),\gamma_S) \in \DD_{\cris}(\Gr^{(e/f)} \mathbb V^*). \]

In this setting, we expect the following result.

\begin{conjecture}\label{conj:rec-law}
Under the running assumptions, the equality \[ \big \langle \mathcal L^{\PR}(\hz_1(\underline{\pi} \times \underline{\sigma}, \gamma_S))(P,Q), \ES_{\pi_P}^1(\xi_P) \otimes \ES_{\sigma_Q}^1(\eta_Q) \big \rangle = \mathcal L_{p,\gamma_S}^{\imp}(\underline{\pi} \times \underline{\sigma})(P,Q) \] holds for all $(P,Q)$ in the geometric range.
\end{conjecture}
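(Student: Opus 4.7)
The plan is to exploit the fact that both sides of the conjectured identity extend to analytic functions on $U \times U'$, reduce to a pointwise check at nice geometric weights (which are Zariski-dense), and at each such weight translate both sides into the same syntomic cup product via $p$-adic Hodge theory. Analyticity of the left-hand side follows from the construction of $\hz_1$ in families in \cite{HJS20} together with $\mathcal O(U \times U')$-linearity of $\mathcal L^{\PR}$ and of the Eichler--Shimura comparisons in families; for $\underline{\sigma}$ this is \cite{KLZ17}, and for $\underline{\pi}$ one needs the analogous statement for $\GSp_4$, which should be a by-now standard consequence of the higher Coleman machinery of \cite{boxerpilloni20}. The right-hand side is analytic by Definition \ref{def:p-adic}. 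So it suffices to verify the identity at each nice geometric $(P,Q)$ separately.

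Fix such a $(P,Q)$, so $r_1-r_2 \le \ell-2 \le r_1$. At such a point the Perrin--Riou logarithm $\mathcal L^{\PR}$ specialises on the unramified graded piece $\Gr^{(E/D)} \mathbb V^*_{P,Q}$ to the Bloch--Kato logarithm $\log_{\mathrm{BK}}$, up to an explicit Euler factor which one expects --- and must verify --- to combine with the $\chi_0, \chi_2$-dependent local terms to reproduce the specialisation of $\mathcal E^{(D)}$ at $(P,Q)$. Since $\loc_p(\hz_1(P,Q))$ lies in the image of $H^1(\QQ_p, \mathcal F^{(E)}\mathbb V^*_{P,Q})$, the class $\log_{\mathrm{BK}}(\loc_p(\hz_1(P,Q)))$ can be computed, via the \'etale-to-syntomic comparison, as the syntomic regulator of the motivic class whose \'etale realisation is $\hz_1(P,Q)$. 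By construction in \cite{HJS20}, this motivic class is an explicit pushforward of a Beilinson--Eisenstein symbol along a correspondence built from the twisted embedding $\iota^\sharp : H \hookrightarrow G$.

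The core identification is then to rewrite this syntomic pairing on the Shimura variety for $H$ as the classical coherent-cohomology cup product computing $\mathcal L_{p,\gamma_S}^{\imp}(\underline{\pi} \times \underline{\sigma})(P,Q)$ via Definition \ref{def:p-adic} and Theorem \ref{thm:pairing}. Concretely, one rewrites the syntomic regulator of the pushed-forward Eisenstein symbol as a pairing between (i) the coherent realisation of $\xi_P$ pulled back along $\iota^\sharp$ and (ii) the syntomic realisation of the Eisenstein symbol, and one identifies the latter with the specialisation of $\mathcal E^{\Phi^{(p)}}(0,t-1) \boxtimes \eta_Q$ at $(P,Q)$. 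This is structurally parallel to the $(E)/(F)$ reciprocity law of \cite{LZ20b-regulator}, but because $\Gr^{(E/D)}$ involves $\Gr^2 V(\underline{\pi})^*$ rather than $\Gr^1$, one must use a different step of the Hodge filtration on the Siegel variety; the flag-variety analysis of Sections 3--4, and in particular the properties of $\iota^{\sharp\sharp}$, feed in here crucially.

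The main obstacle --- and the hardest part of this program --- is matching the syntomic realisation of Beilinson's Eisenstein symbol with the specialisation of the overconvergent Eisenstein family at the non-critical value $t \le -1$, and keeping exact track of the Euler factors produced along the way. Unlike in region $(F)$, at a nice geometric point in region $(E)$ the relevant archimedean Eisenstein series is genuinely \emph{nearly-holomorphic} rather than holomorphic, so one expects an extra differential-operator contribution (morally, a power of the Maass--Shimura operator $\theta$, or its $p$-adic avatar) to appear after comparison, and this must be shown to cancel correctly against the Euler factor $\mathcal E^{(D)}$. This is precisely the place where a more complete theory of nearly-holomorphic forms in higher Hida/Coleman theory, as flagged in the introduction, would provide the cleanest framework; absent such a theory, the identification will require an ad hoc computation analogous to, but distinct from, the Harris--Su-type pairing calculation carried out in \cite{LR2}.
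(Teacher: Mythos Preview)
The statement you are attempting to prove is labelled \emph{Conjecture} in the paper, and the paper explicitly does not prove it; immediately after stating it, the authors write that the main difficulty is ``the lack of semistable models for the different Shimura varieties involved in this picture (Siegel level)'' and express the hope that future developments in higher Coleman theory will resolve this. So there is no ``paper's own proof'' to compare your proposal against: your write-up is a heuristic programme, and you yourself flag several of its gaps.

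Your overall architecture is the natural one and mirrors the region-$(F)$ reciprocity law of \cite{LZ20b-regulator}: interpret the left-hand side via the Bloch--Kato logarithm and syntomic regulators of the Euler-system classes of \cite{HJS20}, and match this to the coherent cup product defining the right-hand side. But two of your steps are not presently available. First, you assert that the Eichler--Shimura comparison $\ES^1_{\underline{\pi}}$ for $\GSp_4$ in families ``should be a by-now standard consequence'' of \cite{boxerpilloni20}; it is not, and the obstruction is exactly the missing semistable models at Siegel Iwahori level that the paper singles out. Without such models one cannot set up the log-crystalline/syntomic comparison on the relevant strata needed to identify the syntomic regulator with the coherent pairing. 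Second, your reduction via analyticity and Zariski density is not really a reduction: the conjecture is already a pointwise statement at nice geometric $(P,Q)$, so the hard work is entirely in the syntomic computation at each such point, which is precisely where the missing input lies. Your closing paragraph correctly identifies the nearly-holomorphic difficulty, but this is a separate (and secondary) issue compared to the semistable-model obstruction.
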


The main difficulty for proving the theorem following an analogous strategy to the case of region $(F)$ is the lack of semistable models for the different Shimura varieties involved in this picture (Siegel level). We hope that a better understanding of higher Coleman theory following the new results of Boxer and Pilloni could lead to a proof of the previous conjecture.

\let\MR\undefined
\newlength{\bibitemsep}
\setlength{\bibitemsep}{0.75ex plus 0.05ex minus 0.05ex}
\newlength{\bibparskip}
\setlength{\bibparskip}{0pt}
\let\oldthebibliography\thebibliography
\renewcommand\thebibliography[1]{%
 \oldthebibliography{#1}%
 \setlength{\parskip}{\bibparskip}%
 \setlength{\itemsep}{\bibitemsep}%
}
\providecommand{\noopsort}[1]{\relax} 

\providecommand{\bysame}{\leavevmode\hbox to3em{\hrulefill}\thinspace}
\providecommand{\MR}[1]{%
 MR \href{http://www.ams.org/mathscinet-getitem?mr=#1}{#1}.
}
\providecommand{\href}[2]{#2}
\newcommand{\articlehref}[2]{\href{#1}{#2}}


\begin{thebibliography}{GPRJ23}

\bibitem[Aga07]{agarwal}
\textsc{M.~K. Agarwal}, \emph{p-adic {L}-functions for {GS}p(4) x {GL}(2)},
  ProQuest LLC, Ann Arbor, MI, 2007, Thesis (Ph.D.)--University of Michigan.
  \MR{2711014}

\bibitem[BP20]{boxerpilloni20}
\textsc{G.~Boxer} and \textsc{V.~Pilloni},
  \articlehref{https://perso.ens-lyon.fr/vincent.pilloni/HigherColeman.pdf}{\emph{Higher
  {C}oleman theory}}, preprint, 2020.

\bibitem[Gra24]{graham21}
\textsc{A.~Graham},
  \articlehref{https://doi.org/10.2140/ant.2024.18.1117}{\emph{On the
  {$p$}-adic interpolation of unitary {F}riedberg-{J}acquet periods}}, Algebra
  Number Theory \textbf{18} (2024), no.~6, 1117--1188. \MR{4740094}

\bibitem[GPRJ23]{GPR}
\textsc{A.~Graham, V.~Pilloni,} and \textsc{J.~Rodrigues~Jacinto},
  \articlehref{http://arxiv.org/abs/2311.14438}{\emph{{$p$}-adic interpolation
  of gauss--manin connections on nearly overconvergent modular forms and
  {$p$}-adic {$L$}-functions}}, preprint, 2023, \path{arXiv:2311.14438}.

\bibitem[HJS20]{HJS20}
\textsc{C.-Y. Hsu, Z.~Jin,} and \textsc{R.~Sakamoto}, \emph{An {E}uler system
  for {$\operatorname{GSp}(4)\times\operatorname{GL}(2)$}}, preprint, 2020.

\bibitem[KLZ17]{KLZ17}
\textsc{G.~Kings, D.~Loeffler,} and \textsc{S.~L. Zerbes},
  \articlehref{http://doi.org/10.4310/CJM.2017.v5.n1.a1}{\emph{{R}ankin--{E}isenstein
  classes and explicit reciprocity laws}}, Cambridge J. Math. \textbf{5}
  (2017), no.~1, 1--122. \MR{3637653}

\bibitem[Liu23]{liu23}
\textsc{Z.~Liu}, \articlehref{http://arxiv.org/abs/2308.08533}{\emph{P-adic
  {L}-functions for {$\GSp(4) \times \GL(2)$}}}, preprint, 2023,
  \path{arXiv:2308.08533}.

\bibitem[LPSZ21]{LPSZ1}
\textsc{D.~Loeffler, V.~Pilloni, C.~Skinner,} and \textsc{S.~L. Zerbes},
  \articlehref{http://doi.org/10.1215/00127094-2021-0049}{\emph{Higher {H}ida
  theory and {$p$}-adic {$L$}-functions for {$\operatorname{GSp}(4)$}}}, Duke
  Math. J. \textbf{170} (2021), no.~18, 4033--4121.

\bibitem[LR24]{LR2}
\textsc{D.~Loeffler} and \textsc{O.~Rivero},
  \articlehref{https://doi.org/10.1093/qmath/haae016}{\emph{Algebraicity of
  {$L$}-values for {$\rm GSp_4 \times GL_2$} and {$\rm GSp_4 \times GL_2 \times
  GL_2$}}}, Q. J. Math. \textbf{75} (2024), no.~2, 391--412. \MR{4765775}

\bibitem[LSZ22]{LSZ17}
\textsc{D.~Loeffler, C.~Skinner,} and \textsc{S.~L. Zerbes},
  \articlehref{https://doi.org/10.4171/jems/1124}{\emph{Euler systems for
  {${\rm GSp}(4)$}}}, J. Eur. Math. Soc. (JEMS) \textbf{24} (2022), no.~2,
  669--733. \MR{4382481}

\bibitem[LZ20a]{LZ20b-regulator}
\textsc{D.~Loeffler} and \textsc{S.~L. Zerbes}, \emph{On {$p$}-adic regulators
  for {$\GSp(4)\times \GL(2)$} and {$\GSp(4)\times \GL(2) \times \GL(2)$}},
  preprint, 2020.

\bibitem[LZ20b]{LZvista}
\bysame, \emph{$p$-adic {L}-functions and diagonal cycles for
  $\operatorname{GSp}(4)\times\operatorname{GL}(2)\times\operatorname{GL}(2)$},
  in preparation, 2020.

\bibitem[LZ21]{LZ21-erl}
\bysame, \articlehref{http://arxiv.org/abs/2106.14511}{\emph{{\noopsort{A}}{O}n
  the {B}loch--{K}ato conjecture for
  {$\operatorname{GSp}(4)\times\operatorname{GL}(2)$}}}, preprint, 2021,
  \path{arXiv:2106.14511}.

\bibitem[Oka19]{okazaki}
\textsc{T.~Okazaki}, \articlehref{http://arxiv.org/abs/1902.07801}{\emph{Local
  {W}hittaker-newforms for {$\operatorname{GSp}(4)$} matching to {L}anglands
  parameters}}, preprint, 2019, \path{arXiv:1902.07801}.

\end{thebibliography}
\end{document}